\documentclass[a4paper,10pt]{amsart}

\usepackage[T1]{fontenc}
\usepackage[utf8]{inputenc}
\usepackage{latexsym}
\usepackage{amsfonts}
\usepackage{amstext}
\usepackage{amsmath}
\usepackage{graphicx}
\usepackage{hyperref}
\usepackage{enumitem}
\usepackage{mathtools}
%\usepackage{rotating}

% \usepackage[all,dvips]{xy}
% \input xy
% \xyoption{all}
% \usepackage{multicol}
\usepackage{amssymb,euscript,graphicx}
\usepackage[ec]{aeguill}       %Ca c'est pour que les pdf soient beaux....
\usepackage{delarray, fancyhdr}
\usepackage[active]{srcltx}

%********************* commandes ****************************
\newcommand{\be}{\begin{enumerate}}
\newcommand{\ee}{\end{enumerate}}
\newcommand{\beqn}{\begin{eqnarray*}}
\newcommand{\eeqn}{\end{eqnarray*}}

\newcommand{\incl}[1][r]
      {\ar@<-0.2pc>@{^(-}[#1] \ar@<+0.2pc>@{-}[#1]}

%****************** Raccourcis mathematiques****************
%********** Caracteres usuels ***************

\def\N{{\mathbb N}}

\def\Pc{{\mathcal P}}

\def\mgo{{\mathfrak m}}

\def\eps{\varepsilon}

\def\ov{\overline}
%***** Fonctions ou operateurs usuels ******
\newcommand{\cartesien}{\ar@{}[dr]|{\square}}

\newcommand{\double}{\ar@<2pt>[r] \ar@<-2pt>[r]}

\def\Ann{\hbox{\rm Ann}}

\def\Com{\hbox{\rm Com}\,}

\DeclareMathOperator{\edim}{edim}

\DeclareMathOperator{\Fit}{Fit}

\def\Frac{\hbox{\rm Frac}\,}

\def\Hom{\hbox{\rm Hom}}

\def\Id{{\rm Id}}
\def\Im{\hbox{\rm Im}\,}

\def\Ker{\hbox{\rm Ker}\,}

\DeclareMathOperator{\prof}{prof}

\DeclareMathOperator{\Spec}{Spec}

\def\Tor{{\rm Tor}\,}
% \DeclareMathOperator{\Tors}{Tors}

%%%*********************Macros plus evoluees********************
%
%

% Faire des fractions qui ont toujours la bonne taille
% Exemple: $\f{1}{2}$, mais aussi $x^{\f{3}{2}}$.

%%% Faire une fleche longue pour une limite.

	% Exemple : $f(x) \tv{x\to 0} 1$

	% Exemple : $u_n \tvn \pi$

%%% Faire des integrales qui ont toujours la bonne taille.
% Ne jamais utiliser \int !

%%% Les elements differentiels dans les integrales.
% Exemples : $\Int{0}{1} f(t) \dt$
% $\f{\dd \Phi}{\dd t}$ % Attention, ne pas mettre {\dt} au d�ominateur.

	% Y'a vraiment des gens qui font des maths bizarres.
 
	% Derivee partielle : D rond f sur D rond t s'ecrit 
% $\Dp{f}{t}$~.

%%% Faire des listes de facon pratique.

	% Exemple : La famille $\nliste{e}$ est libre.

	% Pour les intervalles d'entiers.
	% Exemple : $\forall k \in \intn{0}{n-1} \quad |u_k| < 1$.

%% Pour les matrices

	% up-dots, le symetrique de \ddots sur la diagonale.

	% La transposee d'une matrice: $\trans{A}$.
%
	{\begin{pmatrix}}%
	{\end{pmatrix}}
	% Vu que c'est presque toujours celui-la qu'il 
% nous faut...
%
	{\begin{vmatrix}}%
	{\end{vmatrix}}
%% Des environnements tres pratiques !

%
	{\ensuremath{\left \{ \hskip -1.5 mm \begin{array}{#1@{\ =\ }l}}}%
	{\end{array}\right.}
	% Exemple: $\begin{syst} a & b+2 \\ b & 1 \\ \end{syst}$ 
	% fait ce que l'on pense, ie met un signe egal avec 
% des espaces de part et d'autre.

%
	{\ensuremath{\left \{ \hskip -1.5 mm%
	 \begin{array}{#1@{\ }c@{\ }l}}}%
	{\end{array}\right.}
	% Pour les calculs qui font intervenir autre chose qu'un signe =,
	% par exemple \leq ou \geq ou \sim, quelque part.

%
	{\ensuremath{\left \{ \hskip -1.5 mm \begin{array}{#1@{\quad}l}}}%
	{\end{array}\right.}
	% Exemple: $f(x) = \begin{accolade} \bigskip
	%	\f{\sin x}{x} & \text{si } x\neq 0 \\
	%	1 & \text{sinon} \\
	%	\end{accolade}$

	% $\fonction{f}{\intoo{0}{\pinf}}{\intoo{-1}{\pinf}}{x}{\f{1}{x}-1}$
	% dessine la meme chose que ce que l'on fait 
% naturellement a la main:
	% le symbole f, suivi d'un ':', d'une accolade, puis sur deux
	% lignes l'ensemble de definition, d'arrivee, puis 
% la variable et
	% l'expression de f(x).

\newcommand{\details}[1]{}

%**** Sectionnement : th�or�mes, corollaires, etc... ********
\newtheorem{cor}[subsection]{Corollary}

\newtheorem{prop}[subsection]{Proposition}

\newtheorem{thm}[subsection]{Theorem}

\newtheorem{defi}[subsection]{Definition}

\newtheorem{lem}[subsection]{Lemma}

\newtheorem{remarque}[subsection]{Remark}

\newtheorem{exemple}[subsection]{Example}

\newtheorem{question}[subsection]{Question}

%**** Environnements : demonstrations, etc... ***************

%\newenvironment{exemple}{{\bf Exemple}}{\vskip .2cm}
%\newenvironment{remarque}
%  {\addtocounter{subsection}{1}\noindent{\bf Remarque \thesubsection\ \ }}
%  {\vskip .2cm}
%\newenvironment{sousremarque}
%  {\addtocounter{subsubsection}{1}\noindent{\bf Remarque \thesubsubsection\ \ }}
%  {\vskip .2cm}
%\renewtheorem{remarque}{\rm}{}

\usepackage[all,dvips]{xy}
\input xy
\xyoption{all}
\def\lto{%
\xymatrix{\ar[r] &}}

\DeclareMathOperator{\Jac}{Jac}
\DeclareMathOperator{\pd}{pd}
\renewcommand{\prof}{\rm depth}

\title{Independent sequences and freeness criteria}
\date{\today}

\begin{document}

\author{Sylvain Brochard}
\email{sylvain.brochard@umontpellier.fr}
\address{
IMAG, Université de Montpellier, CNRS, Montpellier, France
}

\subjclass[2010]{13C10 (primary); 13C40, 13D02, 11F80   (secondary)}
\keywords{independent sequences, Noetherian local rings, Koszul complex, embedding dimension, balanced modules}

  \begin{abstract}
Let $M$ be a module over a Noetherian local ring $A$. We study $M$-independent sequences of elements of $\mgo_A$ in the sense of Lech and Hanes. The main tool is a new characterization of the $M$-independence of a sequence in terms of the associated Koszul complex.  As applications, we give a result in linkage theory, a freeness criterion for $M$ in terms of the existence of a strongly $M$-independent sequence of length $\edim(A)$, and another freeness criterion inspired from the patching method of Calegari and Geraghty for balanced modules in their 2018 paper.
 \end{abstract}
  
\maketitle

\section{Introduction}

Independent sequences were introduced by Lech in~\cite{Lech_Inequalities_Related_To}. A sequence $x_1,\dots, x_n$ of elements of a commutative ring $A$ is called \emph{independent} in $A$ if, for every sequence $(a_1,\dots, a_n)\in A^n$, the equality $\sum x_ia_i=0$ implies $a_1,\dots, a_n\in (x_1,\dots, x_n)$. This notion of independence was used by Vasconcelos~\cite{Vasconcelos_Ideals_Generated_By} to characterize ideals generated by regular sequences: a sequence is regular if and only if it is independent and generates an ideal of finite projective dimension (see~\ref{prop:caract_ideaux_reguliers_Vasconcelos}). Besides regular sequences, another important source of examples is the following: any minimal system of generators of the maximal ideal $\mgo_A$ of a Noetherian local ring $A$ is independent.
 Independent sequences were further explored by Hanes in~\cite{Hanes_Length_Approximations_For}, who gave a lower bound for the colength of an ideal generated by an independent sequence, and used it to give partial results for Lech's conjecture about multiplicities in flat couples of local rings. Recently, Meng defined in~\cite{Meng_Strongly_Lech_Independent} the notion of \emph{strongly Lech-independent} sequence and, as Hanes, used it to derive some particular case of Lech's conjecture.

Both notions naturally generalize to notions of independence (resp. strong independence) relatively to a module~$M$ over~$A$ (see~\ref{def:Mindep_sequence} and~\ref{def:strong_M_indep}). Any $M$-regular sequence is (strongly) $M$-independent, but there are in general many more $M$-independent sequences, an extreme case being that of modules over an Artin local ring, where there are no regular sequences at all. Despite their ubiquity, $M$-independent sequences have been relatively unexploited so far. One possible reason for this is that there are few results about them in the literature.

The main result of this paper concerning $M$-independent sequences is the following Theorem. It was previously known only when the matrix $(w_{ij})$ that appears in the statement is diagonal (\cite[Lemma 3]{Lech_Inequalities_Related_To} and~\cite[Lemma 2.1]{Hanes_Length_Approximations_For}), or when the sequence $x_1,\dots, x_n$ is regular~\cite[Proposition 1.2]{Simon_Strooker_Wiebe}.

\begin{thm}
\label{thm:intro}
 Let $A$ be a ring and let $M$ be an $A$-module. Let $x_1,\dots, x_n$ and $u_1,\dots, u_n$ be sequences of elements of $A$, that generate ideals $J_x$ and $J_u$. Assume that the $x_i$'s form an $M$-independent sequence and that for any $i$ there exist elements $w_{ij}\in A$ such that $x_i=\sum w_{ij}u_j$ (i.e. $J_x\subset J_u$). Then:
 \begin{enumerate}
  \item The sequence $u_1,\dots, u_n$ is $M$-independent.
  \item If $\Delta$ denotes the determinant of the matrix $(w_{ij})$, we have the following equalities of submodules of $M$:
  \begin{align*}
 (J_xM:J_u)=(J_x+(\Delta)) M \\
(J_xM: (J_x+(\Delta))) =J_u M
\end{align*}
\item If $M$ is nonzero and of finite type, and if $J_u\subset \Jac(A)$, then $\Delta \notin J_x$.
\end{enumerate}
\end{thm}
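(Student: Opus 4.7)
The strategy is to reduce parts (1) and (3) to part (2), which forms the substantive content. Throughout, let $W'=(w'_{ij})$ denote the adjugate matrix of $W=(w_{ij})$, so that $W'W=WW'=\Delta I$. Substituting $x_i=\sum_k w_{ik}u_k$ and using $(W'W)_{jk}=\Delta\delta_{jk}$ yields the key identity $\sum_i w'_{ji}x_i=\Delta u_j$, so in particular $\Delta J_u\subseteq J_x$.

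Part (3) is then immediate from the second equation of (2): if $\Delta\in J_x$ then $J_x+(\Delta)=J_x$, so $(J_xM:J_x)=J_uM$; the left-hand side trivially contains $M$, forcing $M=J_uM$, which by Nakayama's lemma contradicts $M\neq 0$. For part (1), given $\sum_j u_jm_j=0$, multiplying by $\Delta$ yields $\sum_i x_i\tilde m_i=0$ with $\tilde m_i:=\sum_j w'_{ji}m_j$, so $M$-independence of the $x_i$'s gives $\tilde m_i\in J_xM$ for each $i$. Transposing $W'W=\Delta I$ gives $\sum_i w_{il}w'_{ji}=\Delta\delta_{lj}$, whence $\sum_i w_{il}\tilde m_i=\Delta m_l\in J_xM$, and the second equation of (2) then upgrades this to $m_l\in J_uM$.

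For part (2), the inclusions $(J_x+(\Delta))M\subseteq(J_xM:J_u)$ and $J_uM\subseteq(J_xM:(J_x+(\Delta)))$ are elementary from $\Delta J_u\subseteq J_x$. For the reverse inclusions I would invoke the Koszul-complex characterization of $M$-independence developed earlier in the paper. Given $m\in(J_xM:J_u)$ with $u_jm=\sum_i x_i\alpha_{ij}$, multiplying by $\Delta$ and using the key identity produces, for each $j$, the $x$-syzygy $\sum_i x_i(w'_{ji}m-\Delta\alpha_{ij})=0$, whence $M$-independence gives $w'_{ji}m\equiv\Delta\alpha_{ij}\pmod{J_xM}$ for all $i,j$. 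Analogously, from $\Delta m=\sum_i x_i\beta_i$, multiplying by $u_j$ and applying the same identity yields $w'_{ji}m\equiv u_j\beta_i\pmod{J_xM}$ for all $i,j$. The chain map $K_\bullet(x;M)\to K_\bullet(u;M)$ induced by $W$, whose top-degree component is multiplication by $\Delta$, then organizes these congruences into the required reverse inclusions via the Koszul characterization. The main obstacle is this final packaging step: a naive linear combination of the congruences yields only $I_{n-1}(W)\cdot\bar m\subseteq\Delta\cdot (M/J_xM)$, where $I_{n-1}(W)$ is the ideal of $(n-1)$-minors of $W$, which is strictly weaker than the desired $\bar m\in\Delta\cdot (M/J_xM)$ unless $I_{n-1}(W)=A$. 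The Koszul-complex characterization, by encoding $M$-independence as a structural property of the Koszul complex rather than a pointwise constraint on individual syzygies, is precisely the tool needed to bridge this gap.
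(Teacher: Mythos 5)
Your plan is to prove (2) first and then derive (1) and (3) from it, which is a valid and rather tidy reorganization. The reduction of (3) to (2) is correct and coincides with what the paper does. The reduction of (1) to (2) via the adjugate identities $\Delta u_j=\sum_i w'_{ji}x_i$ and $\sum_i w'_{ji}w_{il}=\Delta\delta_{jl}$ is genuinely different from the paper, which proves (1) directly by homotoping a degree-one chain map $\delta^\mu\circ\Lambda W : K(x)\to K(u,M)$; your route is more elementary and avoids building a fresh chain map, at the price of making (1) logically dependent on the harder half of (2). Both reductions check out.

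The problem is that part (2) --- which you yourself identify as ``the substantive content'' --- is not actually proven. You correctly observe that a naive combination of the pointwise syzygies from $M$-independence only yields $I_{n-1}(W)\cdot\bar m\subseteq\Delta\cdot(M/J_xM)$ rather than $\bar m\in\Delta\cdot(M/J_xM)$, and you correctly name the Koszul characterization as the tool to overcome this. But the concluding sentence, that the chain map $K(x;M)\to K(u;M)$ ``organizes these congruences into the required reverse inclusions,'' is not an argument. In fact the two reverse inclusions require different chain-level constructions. For $(J_xM:(J_x+(\Delta)))\subseteq J_uM$ one composes $K(x)\xrightarrow{\Lambda W}K(u)\xrightarrow{m\cdot}K(u,M)$, notes the top component is $\Delta m\in J_xM$, and applies Corollary~\ref{cor:homotopies} to produce a degree-zero homotopy landing in $J_uM$; this is a single application of the corollary, but it is a map out of $K(x)$, not $K(x;M)$. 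For $(J_xM:J_u)\subseteq(J_x+(\Delta))M$ the paper instead builds, by an ascending induction in degree, a family of maps $f_\ell:K_\ell(u)\to K_\ell(x,M)$ satisfying two congruences \eqref{eq:liaison1} and \eqref{eq:liaison2} modulo $J_x$, and extracts $m\in J_xM+\Delta M$ from $f_n$; this is not a direct instance of either Koszul corollary, and nothing in your sketch anticipates this inductive step. So while your high-level diagnosis of the difficulty is accurate, the bridge you assert the Koszul machinery provides has not been built, and the proof of (2) remains a gap.
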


A more complete statement is given in~\ref{thm:liaison_suites_Mindep}. Assertion (2) can be viewed as a result in linkage theory. Indeed, if $n=\edim(A)$, the ring $A/J_x$ is a complete intersection by~\ref{thm:freeness_criterion_from_Mindep}, and assertion (2) then says that the ideals $J_u$ and $J_x+(\Delta)$ are linked by $J_x$ over the module $M$ (see~\cite{Peskine_Szpiro_Liaison_Des_Varietes}, \cite{Martsinkovsky_Strooker_Linkage_Of_Modules} or \cite{Jahangiri_Sayyari_Linkage_Of_Ideals}). The main tool to prove~\ref{thm:intro} is a characterization of the  $M$-independence of a sequence $x$ in terms of the Koszul complex $K(x)$, see~\ref{prop:Mindep_en_termes_Koszul} and its corollaries~\ref{cor:pseudo-homotopies} and \ref{cor:homotopies}.

\subsection*{Applications.}
Let $A$ be a Noetherian local ring and let $M$ be a finite type $A$-module of finite projective dimension. Then the depth of $M$ and its projective dimension are related by the following formula (Auslander-Buchsbaum):
\[
 \prof_A(M)+\pd_A(M)=\prof(A).
\]
A useful consequence of this is that $M$ is free over $A$ if and only if it has a regular sequence of the maximal possible length $\prof(A)$. Using~\ref{thm:intro}, we get a similar statement involving $M$-independent sequences instead of $M$-regular sequences. Before stating it, note that it follows from~\ref{thm:liaison_suites_Mindep} that, if $A$ is a Noetherian local ring, $\edim(A)$ is the maximal possible length of an $M$-independent sequence.

\begin{thm}
 \label{thm:intro_freeness}
 Let $A$ be a Noetherian local ring and $M$ an $A$-module of finite type.
 \begin{enumerate}
  \item $M$ is free over $A$ if and only if there exists a strongly $M$-independent sequence of length~$\edim(A)$ in $\mgo_A$.
  \item $M/\mgo_A^2M$ is free over $A/\mgo_A^2$ if and only if there exists an $M$-independent sequence of length~$\edim(A)$ in $\mgo_A$.
 \end{enumerate}
\end{thm}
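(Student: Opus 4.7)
For both parts, I would exhibit a fixed minimal system of generators $u_1,\dots,u_n$ of $\mgo_A$ (so $n=\edim(A)$) as the required witness sequence. For part (2), assuming $M/\mgo_A^2 M\cong (A/\mgo_A^2)^r$, given a relation $\sum u_i m_i=0$ in $M$, I reduce modulo $\mgo_A^2 M$ and expand each $\bar m_i$ in a chosen basis; separating constant and $\mgo_A$-parts of the coefficients and using that products of the form $\mgo_A\cdot(\mgo_A/\mgo_A^2)$ vanish in $A/\mgo_A^2$, the linear independence of $\bar u_1,\dots,\bar u_n$ in $\mgo_A/\mgo_A^2$ forces these coefficients into $\mgo_A/\mgo_A^2$, whence $m_i\in\mgo_A M$. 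For part (1), an isomorphism $M\cong A^r$ reduces the strong $M$-independence of $(u_i)$ to its strong $A$-independence, which I would extract by specialising the Koszul characterisation of Corollary~\ref{cor:homotopies} to $A$ itself.

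\textbf{If direction of (2).} Let $(x_i)$ be $M$-independent of length $n=\edim(A)$ in $\mgo_A$ and let $(u_i)$ be a minimal system of generators of $\mgo_A$. The inclusion $J_x\subset\mgo_A=J_u$, together with both sequences having length $n$, lets me apply Theorem~\ref{thm:intro}(1) and conclude that $(u_i)$ is itself $M$-independent. Freeness of $M/\mgo_A^2 M$ over $A/\mgo_A^2$ is equivalent to the injectivity of the canonical surjection $(M/\mgo_A M)^n\twoheadrightarrow\mgo_A M/\mgo_A^2 M$, $(\bar m_i)\mapsto\sum u_i m_i$. A relation $\sum u_i m_i=\sum_{j,k}u_j u_k r_{jk}$ rewrites as $\sum u_i(m_i-\sum_k u_k r_{ik})=0$, so the $M$-independence of $(u_i)$ forces $m_i\in\mgo_A M$.

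\textbf{If direction of (1) and main obstacle.} Part (2) already delivers freeness of $M/\mgo_A^2 M$ over $A/\mgo_A^2$; the remaining work is to convert the extra ``strong'' content of the hypothesis into freeness of $M$ itself. My plan is an induction showing that $M/\mgo_A^{k+1}M$ is free over $A/\mgo_A^{k+1}$ for every $k\geq 1$, and then passage to the $\mgo_A$-adic completion: freeness of $\widehat M$ over $\widehat A$, together with faithful flatness of $\widehat A$ over $A$ and finite generation of $M$, descends to freeness of $M$. The inductive step is where the actual Koszul homotopies provided by Corollary~\ref{cor:homotopies}---rather than the mere pseudo-homotopies sufficient for ordinary $M$-independence---must enter, supplying the splittings required to propagate a basis from level $k$ to level $k+1$. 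Identifying the right inductive invariant and verifying that the homotopy-level data from strong $M$-independence really do suffice to maintain it across all successive powers of $\mgo_A$ is where I expect the bulk of the technical difficulty to lie.
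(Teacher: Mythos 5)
The only-if directions and the ``if'' direction of (2) are essentially sound, but with one asserted step left unproved. For (2), after invoking Theorem~\ref{thm:intro}(1) to pass from the given sequence to a minimal generating system $u$ of $\mgo_A$, you claim freeness of $M/\mgo_A^2M$ over $A/\mgo_A^2$ is \emph{equivalent} to injectivity of the surjection $(M/\mgo_AM)^n\to\mgo_AM/\mgo_A^2M$, but you only verify the direction you need going \emph{into} the equivalence (injectivity of $\varphi_u$ from $M$-independence of $u$); the direction ``injectivity $\Rightarrow$ freeness of $M/\mgo_A^2M$'' is itself the substance and requires a dimension count on a minimal presentation (or, equivalently, Proposition~\ref{prop:indep_implies_freeness}(1) with $I=\mgo_A$). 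Your route here (via Theorem~\ref{thm:intro}(1) applied to $J_x\subset\mgo_A$) is genuinely different from the paper's, which applies Theorem~\ref{thm:freeness_criterion_from_Mindep}(3) to $I=J_x$ to get $M/J_x^2M$ free over $A/J_x^2$ and then base changes along $A/J_x^2\to A/\mgo_A^2$; both are fine once the missing step is filled.

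The real gap is the ``if'' direction of (1). Your plan—induct to show $M/\mgo_A^{k+1}M$ free over $A/\mgo_A^{k+1}$, then complete—does not match the hypothesis. Strong $M$-independence of the given sequence controls $R_{J_x^i}(M)$ for all $i$, not $R_{\mgo_A^i}(M)$, and Theorem~\ref{thm:intro}(1) does \emph{not} transport strong $M$-independence from $J_x$ to $\mgo_A$: the paper's own remark after Theorem~\ref{thm:liaison_suites_Mindep} gives the counterexample $A=k[x]/(x^8)$, where $(x^4)$ is strongly independent but $(x^3)$ is not. If you instead induct on powers of $J_x$ (which would still suffice, since $A/J_x$ is zero-dimensional so $J_x$-adic and $\mgo_A$-adic completion agree), you then need the base case that $M/J_xM$ is free over $A/J_x$, and this is exactly where Proposition~\ref{prop:indep_implies_freeness}(2) stops helping and the missing ingredient lives. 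The paper supplies it in Theorem~\ref{thm:freeness_criterion_from_Mindep}: Theorem~\ref{thm:liaison_suites_Mindep}(4) gives $\Delta\notin J_x$ for the Wiebe determinant; then $A/J_x$ is a zero-dimensional complete intersection by \cite[2.7]{Simon_Strooker_Wiebe}, hence Gorenstein; and the condition ``$\Delta m\in J_xM\Rightarrow m\in\mgo_AM$'' (a consequence of $M$-independence and $(J_xM:\Delta)=\mgo_AM$) says $M/J_xM$ is weakly torsion-free over the Gorenstein ring $A/J_x$, hence free by \cite[3.7]{Brochard_Mezard_ConjDeSmit}. This complete-intersection/weakly-torsion-free step is not recoverable from Koszul homotopies alone, and once it is in place the paper deduces (1) directly from Theorem~\ref{thm:freeness_criterion_from_Mindep}(4) and Proposition~\ref{prop:free_implies_maxid_indep}, with no further induction or completion needed.
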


\noindent
Theorem~\ref{thm:intro_freeness} is an immediate consequence of~\ref{prop:free_implies_maxid_indep} and \ref{thm:freeness_criterion_from_Mindep}. Note that we do not need to assume that $M$ has finite projective dimension. To see how this freeness statement can be applied, let us give a new and more transparent proof of de Smit's conjecture (see~\cite{Brochard_DeSmit2}, and~\cite{Brochard_Iyengar_Khare_A_Freeness_Criterion} for a more general statement).

\begin{cor}
 \label{cor:desmit}
 Let $\varphi : A\to B$ be a flat local morphism of Noetherian local rings with $\edim(A)\geq \edim(B)$ and let $M$ be a $B$-module of finite type that is free over~$A$. Then $M$ is free over $B$ and $\edim(A)=\edim(B)$.
\end{cor}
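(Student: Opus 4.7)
The plan is to apply Theorem~\ref{thm:intro_freeness}(1) to $M$ viewed as a $B$-module; this requires producing a strongly $M$-independent sequence of length~$\edim(B)$ inside $\mgo_B$.

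First I would pick a minimal system of generators $x_1,\dots,x_d$ of $\mgo_A$, where $d=\edim(A)$. Since $M$ is $A$-free, Proposition~\ref{prop:free_implies_maxid_indep} tells me that $x_1,\dots,x_d$ is strongly $M$-independent over $A$. I then set $y_i=\varphi(x_i)\in\mgo_B$ and claim that $y_1,\dots,y_d$ is strongly $M$-independent over $B$. Because the $A$-module structure on $M$ factors through $\varphi$, multiplication by $x_i$ on $M$ coincides with multiplication by $y_i$; in particular $\sum x_im_i=\sum y_im_i$ for all $m_i\in M$ and $(x_1,\dots,x_d)M=(y_1,\dots,y_d)M$. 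By the Koszul-complex characterization~\ref{prop:Mindep_en_termes_Koszul}, both $M$-independence and its strong form depend only on these multiplication operators, and $K_\bullet(x_1,\dots,x_d;A)\otimes_A M$ is literally the same complex as $K_\bullet(y_1,\dots,y_d;B)\otimes_B M$. Hence the claim.

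Next, by Theorem~\ref{thm:liaison_suites_Mindep} (as pointed out just before Theorem~\ref{thm:intro_freeness}), $\edim(B)$ is the maximal possible length of an $M$-independent sequence in $\mgo_B$, so $d\le\edim(B)$. Combined with the hypothesis $\edim(A)\ge\edim(B)$ this forces $\edim(A)=\edim(B)=d$, and Theorem~\ref{thm:intro_freeness}(1) applied to $y_1,\dots,y_d$ then yields freeness of $M$ over $B$.

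The main obstacle is the transfer claim: one must be sure that ``strongly $M$-independent'' is a genuinely intrinsic property of the ring action on $M$, so that it passes along $\varphi$ without loss. The Koszul-complex reformulation should make this transparent, but if the definition~\ref{def:strong_M_indep} carries extra conditions on the ring elements $x_i$ beyond their action on $M$, I would need to verify those transfer as well, using that $\varphi$ is a flat local morphism (and, if needed, that $\varphi(x_1),\dots,\varphi(x_d)$ can be extended to a minimal generating system of $\mgo_B$).
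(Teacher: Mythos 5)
Your proposal follows the paper's route almost exactly: extract a strongly $M$-independent sequence of length $\edim(A)$ in $A$ from $A$-freeness (via~\ref{prop:free_implies_maxid_indep}), transfer it along $\varphi$, bound its length by $\edim(B)$ via~\ref{thm:liaison_suites_Mindep}, and conclude with~\ref{thm:intro_freeness}(1). The paper handles the transfer step by simply citing~\ref{prop:permanence_indep}(2).

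The one real issue is the justification you give for the transfer. You argue that both $M$-independence \emph{and} strong $M$-independence ``depend only on the multiplication operators'' via the Koszul-complex characterization~\ref{prop:Mindep_en_termes_Koszul}. That is true for ordinary $M$-independence, and your observation that $K(x;A)\otimes_A M$ and $K(\varphi(x);B)\otimes_B M$ are literally the same complex of abelian groups is exactly right for that case. But~\ref{prop:Mindep_en_termes_Koszul} says nothing about \emph{strong} $M$-independence, and Definition~\ref{def:strong_M_indep} is \emph{not} intrinsic to the action of the $x_i$ on $M$: it requires $R_{I^i}(M)\subset IM$ for all $i\geq 1$, and $R_{I^i}(M)$ is computed from a minimal generating set of the power $I^i$, not from the original sequence. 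A priori $\mu_A(I^i)$ and $\mu_B((IB)^i)$ could differ, in which case the two $R$'s are defined by different sets of relations. So the Koszul argument, as stated, does not cover the strong case, which is the case you actually need to invoke~\ref{thm:intro_freeness}(1). You flag this concern at the end, correctly guessing that flatness should resolve it, but you never carry it out; what is needed is precisely the content of~\ref{prop:permanence_indep}(2): since $B$ is $A$-flat, a minimal system of generators of $I^i$ maps to a minimal system of generators of $(IB)^i=I^iB$, hence $R_{I^i}(M)$ and $R_{(IB)^i}(M)$ coincide as submodules of $M$. With that lemma in hand your proof closes up and matches the paper's.
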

\begin{proof}
 Since $M$ is free over $A$, there exists a strongly $M$-independent sequence $x_1,\dots, x_n$ of elements of $A$ with $n=\edim(A)$. By~\ref{prop:permanence_indep} the sequence $\varphi(x_1),\dots, \varphi(x_n)$ is a strongly $M$-independent sequence in $\mgo_B$. Since $n\geq \edim(B)$, it follows from~\ref{thm:intro_freeness} that $M$ is $B$-free and from~\ref{thm:liaison_suites_Mindep} that $\edim(A)=\edim(B)$. 
\end{proof}

De Smit's conjecture appeared in the context of the proof of Fermat's Last Theorem, to avoid the patching method.
This patching method has been generalized and adapted to many other contexts in modularity lifting problems. A recent example is the freeness criterion given by Calegari and Geraghty in~\cite[\S 2]{Calegari_Geraghty}. It seems natural to wonder if we can dispense with patching in their work, as de Smit's statement allows to do in the proof of Fermat's Last Theorem.
Our main result in this direction is the following (see~\ref{thm_VP}), which means that it is possible to avoid patching, at the cost of a complete intersection assumption.  We refer to the introduction of Section~4 for more details about this question.

\begin{thm}
 \label{thm:intro_VP}
Let $\varphi : A\to B$ be a finite local morphism of Noetherian local rings such that $B/\mgo_A B$ is a complete intersection. 
Let $M$ be a $B$-module which is of finite type over $A$.
Assume that
\[
 \dim_k\Tor_A^1(M,k)\leq (\edim(A)-\edim(B)).\dim_k\Tor_A^0(M,k).
\]
Then $M$ is free over $B$, and the above inequality is an equality.
\end{thm}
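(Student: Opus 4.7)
The plan is to apply Theorem~\ref{thm:intro_freeness}(1): it suffices to exhibit a strongly $M$-independent sequence of length $\beta:=\edim(B)$ in $\mgo_B$. Set $e=\edim(A)$, $\delta=\edim(B/\mgo_A B)$ and $d=e-\beta$. Using the CI hypothesis, I would first present $B=\tilde A/(\tilde f_1,\dots,\tilde f_\delta)$ with $\tilde A$ the local ring $A[T_1,\dots,T_\delta]_{\mgo_A+(T)}$: the $T_j$'s lift a minimal generating system of $\mgo_{B/\mgo_A B}$, so $\tilde A$ is flat over $A$ with regular fiber of dimension $\delta$, $\edim(\tilde A)=e+\delta$, the map $\tilde A\to B$ is surjective, and $(\tilde f_j)$ is a $\tilde A$-regular sequence lifting the regular sequence which cuts out the complete intersection $B/\mgo_A B$. (For clarity, assume $k_A=k_B$; the general case reduces to this after a standard base change along a finite unramified extension.)

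The main step is to rephrase the Tor hypothesis over $A$ as a Koszul-theoretic statement on $M$. Combining the $\tilde A$-free Koszul resolution $K(T_1,\dots,T_\delta;\tilde A)\to A$ with a minimal free $A$-resolution $F_\bullet\to k_A$ yields a free $\tilde A$-resolution of $k_A$, and I would analyze the two spectral sequences of the associated double complex $K(T;M)\otimes_A F_\bullet$ whose total homology computes $\Tor_{\tilde A}^\bullet(M,k_A)$. One of them degenerates (because $F_\bullet$ resolves $k_A$) and yields $\Tor_{\tilde A}^i(M,k_A)\cong H_i(K(T;\bar M))$ with $\bar M:=M/\mgo_A M$ viewed as a $\bar B:=B/\mgo_A B$-module; the other produces a comparison bound of $\Tor_{\tilde A}^*(M,k_A)$ against $\Tor_A^*(H_*(K(T;M)),k_A)$. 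Using the CI structure of $\bar B$ to compute $H_*(K(T;\bar M))$ explicitly, these identities convert the hypothesis $\dim_k\Tor_A^1(M,k)\leq d\cdot\dim_k\Tor_A^0(M,k)$ into a Koszul-homology bound for the length-$(e+\delta)$ generating sequence $(\varphi(x_1),\dots,\varphi(x_e),T_1,\dots,T_\delta)$ of $\mgo_{\tilde A}$ acting on $M$.

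The conclusion is then extracted using Theorem~\ref{thm:intro} (in its full form~\ref{thm:liaison_suites_Mindep}) together with the Koszul characterization~\ref{prop:Mindep_en_termes_Koszul} of $M$-independence: choose a minimal generating system $\tilde x_1,\dots,\tilde x_{e+\delta}$ of $\mgo_{\tilde A}$ ordered so that the first $\beta$ images span $\mgo_B/\mgo_B^2$ while the remaining $d+\delta$ fall in $\mgo_B^2$; the Koszul bound above forces $\varphi(\tilde x_1),\dots,\varphi(\tilde x_\beta)$ to be strongly $M$-independent in $\mgo_B$, so that $M$ is $B$-free by Theorem~\ref{thm:intro_freeness}(1). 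The equality in the statement then follows: once $M\cong B^r$, the quantities $\dim_k\Tor_A^1(B,k)$ and $d\cdot\dim_k\Tor_A^0(B,k)$ can be read off from the presentation $B=\tilde A/(\tilde f)$ via the change-of-rings spectral sequence, and they coincide precisely because $B/\mgo_A B$ is a complete intersection. The principal obstacle is the Koszul bookkeeping of the middle paragraph: converting the numerical Tor inequality into the pseudo-homotopies required by~\ref{prop:Mindep_en_termes_Koszul}, and then using Theorem~\ref{thm:intro}(1) to pass from $M$-independence at the level of $\tilde A$ to strong $M$-independence of the length-$\beta$ image in $\mgo_B$.
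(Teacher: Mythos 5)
Your plan hinges on producing a \emph{strongly} $M$-independent sequence of length $\edim(B)$ in $\mgo_B$ so as to apply Theorem~\ref{thm:intro_freeness}(1), but this is precisely where the argument breaks down. Strong $M$-independence is the assertion $R_{I^i}(M)\subset IM$ for \emph{every} $i\geq 1$, an infinite family of conditions; it is not plausible that a single numerical hypothesis, $\dim_k\Tor_A^1(M,k)\leq (\edim A-\edim B)\,\dim_k\Tor_A^0(M,k)$, constraining only the first two Betti numbers of $M$ over $A$, could directly force it. The Koszul characterization you invoke (Proposition~\ref{prop:Mindep_en_termes_Koszul} and Corollaries~\ref{cor:pseudo-homotopies}, \ref{cor:homotopies}) only detects plain $M$-independence, and nothing in your spectral-sequence rephrasing or in Theorem~\ref{thm:liaison_suites_Mindep}(1) upgrades independence to strong independence. (The paper even has a remark after Theorem~\ref{thm:liaison_suites_Mindep} pointing out that strong independence, unlike independence, is \emph{not} preserved by the matrix-transfer operation $x=uW$.)

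The paper's own proof of Theorem~\ref{thm:intro_VP} (via Theorem~\ref{thm_VP}) deliberately avoids this trap. It only establishes ordinary $M$-independence of a suitable subsequence $(x_{\delta+1},\dots,x_n)$ of a minimal system of generators of $\mgo_A$ (Proposition~\ref{prop:critere_suite_Mindep_pour_modules_eq}), pushes it into $B$ via Proposition~\ref{prop:permanence_indep}, and applies Theorem~\ref{thm:freeness_criterion_from_Mindep} to the ideal $\mgo_A B\subset B$ to get only that $M/\mgo_AM$ is free over $B/\mgo_A B$ (and that the latter is a zero-dimensional complete intersection). The jump from ``$M/\mgo_AM$ free over $B/\mgo_AB$'' to ``$M$ free over $B$'' is then done by a separate, elementary but delicate argument: comparing the kernels $K=\Ker(\pi)$ and $K'=\Ker(p\circ\pi)$ of two minimal presentations over $A$, decomposing the presentation matrices along the chosen generators $x_1,\dots,x_n$ of $\mgo_A$, and showing via Lemma~\ref{lemme:defaut_negatif_ameliore} that the comparison matrix $P$ with $D=D'P$ reduces to the identity modulo $\mgo_A$ and hence is invertible, forcing $K=K'$. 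That matrix step is the genuine work, and it is what your proposal is missing: without it you are stuck with freeness of $M/\mgo_AM$ over $B/\mgo_A B$, not freeness of $M$ over $B$.

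A secondary point: introducing $\tilde A=A[T_1,\dots,T_\delta]_{(\mgo_A,T)}$ and passing to a Cohen-style presentation $B=\tilde A/(\tilde f)$ is not used anywhere in the paper, and it adds technical overhead (residue-field issues, lifting the regular sequence, flat base change) without replacing the missing step. If you wish to repair the argument along your lines, you should drop the claim of strong $M$-independence and instead reproduce the paper's two-stage strategy: ordinary $M$-independence to handle the first infinitesimal neighborhood, then a presentation-matrix comparison (or a Nakayama-type inductive argument) to conclude.
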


\noindent

\subsection*{Notations.}
If $A$ is a commutative ring, $\Jac(A)$ denotes its Jacobson radical. If $M$ is an $A$-module, its annihilator is denoted by $\Ann_A(M)$. If $N$ is a submodule of~$M$, the annihilator of $M/N$, i.e. the ideal of elements $a\in A$ such that $am\in N$ for all $m\in M$, is denoted by $(N:M)$. If $I$ is an ideal of $A$, the submodule $\{m\in M\ |\  \forall a\in I, am\in N\}$ is denoted by $(N:I)$. If $M$ is an $A$-module of finite type, we denote by $\Fit_A(M)$ its initial Fitting ideal.

If $A$ is a Noetherian local ring, we 
denote by $\mgo_A$ its maximal ideal and by~$\kappa(A)$ its 
residue field. The embedding dimension of $A$, i.e. the minimal number of 
generators of $\mgo_A$, is denoted by $\edim(A)$. The minimal number of generators of an $A$-module~$M$ is denoted by
$\mu_A(M)$. We have 
$\mu_A(M)=\dim_{\kappa(A)}(M/\mgo_AM)$. In particular, $\edim(A)=\mu_A(\mgo_A)$.

\subsection*{Acknowledgments.} I warmly thank Vincent Piloni who pointed out to me the article~\cite{Calegari_Geraghty}, and raised the central question of 
section~\ref{section:balanced_desmit}.

\section{Elementary properties of \texorpdfstring{$M$}{M}-independent sequences}
\label{section:indep_seq}

The following definition of an $M$-independent sequence is due to Lech \cite[p. 77]{Lech_Inequalities_Related_To} and Hanes \cite[Definition 2.1]{Hanes_Length_Approximations_For}.

\begin{defi}
\label{def:Mindep_sequence}
Let $A$ be a ring and let $M$ be an $A$-module. A 
sequence $x=(x_1, \dots, x_n)$ of elements of $A$ is $M$-independent if, for 
any $m_1, \dots, m_n\in M$, the relation $\sum x_im_i=0$ implies $m_i\in (x_1, 
\dots, x_n)M$.
\end{defi}

\noindent
Equivalently, if $I$ denotes the ideal generated by $x_1,\dots, x_n$, the sequence $x$ is $M$-independent if and only if the natural surjection 
\[
 \varphi_x : (M/IM)^n \lto IM/I^2M
\]
defined by $\varphi_x(m_1,\dots, m_n)=\sum x_im_i$ is an isomorphism. If $A$ is a Noetherian local ring and if $M=A$, it follows from Nakayama's Lemma that the sequence $x$ is $A$-independent if and only if $I/I^2$ is a free $(A/I)$-module. In particular this does not depend on the choice of the generating sequence $x$ for $I$. Meng defined the notion of strong independence as follows~\cite[Definition 3.1]{Meng_Strongly_Lech_Independent}: the ideal $I$ is said to be strongly independent if $I^i/I^{i+1}$ is free over $A/I$ for any $i\geq 1$. The following definition will be convenient to give another rephrasing of Definition~\ref{def:Mindep_sequence} and to generalize the strong independence to modules.

\begin{defi}
 Let $A$ be a Noetherian local ring, $M$ an $A$-module and $I$ an ideal of~$A$. 
Let $(x_1, \dots, x_n)$ be a minimal system of generators of $I$. We denote by 
$R_I(M)$ the submodule of $M$ generated by the elements $m_i\in M$ for all 
relations
\[
 x_1m_1+\dots+x_nm_n=0\, .
\]
(Note that, by~\cite[4.1]{Brochard_Mezard_ConjDeSmit}, $R_I(M)$ does 
not depend on the choice of $(x_1, \dots, x_n)$.)
\end{defi}

\noindent
By definition, a sequence $x_1,\dots, x_n$ is $M$-independent if and only if $R_I(M)\subset IM$, where $I=(x_1,\dots, x_n)$. We shall sometimes say that the ideal $I$ is $M$-independent. Note also that if $A$ is a Noetherian local ring and if $x_1,\dots, x_n\in \mgo_A$ is an $M$-independent sequence for some nonzero $A$-module $M$ of finite type, then by Nakayama $x_1,\dots, x_n$ is necessarily a \emph{minimal} system of generators of the ideal $(x_1,\dots, x_n)$. 
\details{
Preuve : Si $x$ n'est pas minimal, on peut extraire de $x$ un système minimal de générateurs. Alors l'un des $x_i$, par exemple $x_n$, est combinaison linéaire des autres. Par $M$-indépendance (et parce que $I\subset \mgo_A$), on en déduit que $M\subset \mgo_AM$, d'où $M=0$ par Nakayama.
}

\begin{defi}
\label{def:strong_M_indep}
 Let $A$ be a Noetherian local ring and $M$ an $A$-module. We say that an ideal $I$ of $A$ is strongly $M$-independent if $R_{I^i}(M)\subset IM$ for any $i\geq 1$. A sequence is called strongly $M$-independent if it generates a strongly $M$-independent ideal. 
\end{defi}

Independent sequences behave well through morphisms of rings.

\begin{prop}
\label{prop:permanence_indep}
 Let $\varphi : A\to B$ be a local morphism of Noetherian local rings and let $M$ be a $B$-module.
\begin{enumerate}
\item A sequence of elements of $A$ is $M$-independent if and only if its image by~$\varphi$ is $M$-independent.
\item Assume that $\varphi$ is flat. Then a sequence of elements of $A$ is strongly $M$-independent if and only if its image by $\varphi$ is strongly $M$-independent.
\end{enumerate}
\end{prop}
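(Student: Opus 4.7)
The plan is to reduce both assertions to tautological identifications, once the relations on the $A$-side and the $B$-side are matched; in (2), matching minimal generating systems of powers requires one genuine ingredient, which is where the flatness hypothesis comes in.

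For (1), I would start from the observation that, since $M$ is a $B$-module via $\varphi$, multiplication by $a\in A$ on $M$ coincides with multiplication by $\varphi(a)$. Consequently, any relation $\sum x_i m_i = 0$ in $M$ is literally the same equation as $\sum \varphi(x_i)m_i = 0$, and the submodules $(x_1,\dots,x_n)M$ and $(\varphi(x_1),\dots,\varphi(x_n))M$ of $M$ coincide. The condition of Definition~\ref{def:Mindep_sequence} then reads identically on both sides. Note in particular that this definition does not require the test sequence to be a minimal system of generators, which is why no hypothesis on $\varphi$ is needed here.

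For (2), write $I=(x_1,\dots,x_n)\subset A$ and $J=IB\subset B$, and fix $i\geq 1$. The key step is to match minimal generating systems of $I^i$ and $J^i$. I would argue as follows: by flatness, $I^i\otimes_A B \cong I^iB = J^i$; then, since $\varphi$ is local so that $\mgo_A B\subset \mgo_B$, reducing modulo $\mgo_B$ gives $J^i/\mgo_B J^i \cong (I^i/\mgo_A I^i)\otimes_{A/\mgo_A}(B/\mgo_B)$, whence $\mu_A(I^i)=\mu_B(J^i)$. Thus any minimal generating system $(z_1,\dots,z_{r_i})$ of $I^i$ in $A$ maps to a minimal generating system $(\varphi(z_1),\dots,\varphi(z_{r_i}))$ of $J^i$ in $B$. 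Since $R_{J^i}(M)$ does not depend on the chosen minimal system, we may compute it using this image; the relations $\sum z_j m_j = 0$ and $\sum \varphi(z_j) m_j = 0$ in $M$ are then literally the same, so $R_{I^i}(M)=R_{J^i}(M)$. Combined with $IM=JM$, this yields the desired equivalence.

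The one nontrivial ingredient is the equality $\mu_A(I^i)=\mu_B(J^i)$, which is precisely where flatness is used in (2); once it is in hand, the remainder of the argument is formal, exactly as in (1).
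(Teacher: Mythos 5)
Your proof is correct and follows essentially the same route as the paper: (1) is a tautology once the $A$-action on $M$ is identified with the $B$-action via $\varphi$, and (2) rests on the single nontrivial fact that flatness (plus locality) makes the image of a minimal generating system of $I^i$ a minimal generating system of $I^iB$, which is exactly the ingredient the paper invokes. Your Nakayama-style justification of $\mu_A(I^i)=\mu_B(I^iB)$ just spells out what the paper leaves implicit.
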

\begin{proof}
(1) is obvious on the definition of $M$-independence. (2) also follows from the definition, and from the fact that for any ideal $I$ of $A$, if $x_1,\dots, x_n$ is a minimal system of generators of $I$, then $\varphi(x_1),\dots, \varphi(x_n)$ is a minimal system of generators of the ideal $IB$ of $B$. The latter fact holds because $B$ is flat over $A$.
\end{proof}

As mentionned above, any $M$-regular sequence is strongly $M$-independent: this is an immediate consequence of the fact that any $M$-regular sequence is $M$-quasi-regular, see e.g.~\cite[\href{https://stacks.math.columbia.edu/tag/061M}{10.69.2, Tag 061M}]{Stacks_Project}.
\details{
Remarque : voici une preuve très élémentaire du fait que régulier implique $M$-indépendant (ce qui est plus faible que strongly $M$-independent). Let $(x_1, \dots, x_n)$ be an $M$-regular sequence. If $n=0$ there is nothing to prove. Arguing by induction, we may assume that any $M$-regular sequence of length $n-1$ is $M$-independent. In particular the sequence $(x_1,\dots, x_{n-1})$ is $M$-independent. Let $x_1m_1+\dots+x_nm_n=0$ be a relation in $M$. Since $x_n$ is a nonzerodivisor on $M/(x_1,\dots, x_{n-1})M$ we can write $m_n=x_1m'_1+\dots+x_{n-1}m'_{n-1}$ for some $m'_i\in M$. Then $\sum_{i=1}^{n-1}x_i(m_i+x_nm'_i)=0$ and the $M$-independence of the sequence $(x_1,\dots, x_{n-1})$ implies that $m_i\in (x_1,\dots, x_n)M$ for all $i$. 
}
To illustrate further the connection between independent sequences and regular sequences, let us recall the following result of Vasconcelos.

\begin{prop}[{\cite[Cor.1 p.311]{Vasconcelos_Ideals_Generated_By}}]
\label{prop:caract_ideaux_reguliers_Vasconcelos}
 Let $A$ be a Noetherian local ring and let $x=(x_1, \dots, x_n)$ be a sequence 
of elements of $\mgo_A$. The following are equivalent:
\begin{enumerate}
 \item  The sequence $x$ is regular.
\item The ideal $J_x=(x_1,\dots, x_n)$ has finite projective dimension, and the 
sequence $x$ is $A$-independent.
\end{enumerate}
\end{prop}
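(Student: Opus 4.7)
\emph{Proof proposal.} The plan is to handle $(1)\Rightarrow(2)$ directly and to reduce $(2)\Rightarrow(1)$ to the central theorem of~\cite{Vasconcelos_Ideals_Generated_By} on ideals with free conormal module. For $(1)\Rightarrow(2)$, the $A$-independence of a regular sequence is the case $M=A$ of the already recorded fact that regular sequences are strongly independent; and when $x$ is regular the Koszul complex $K(x_1,\dots,x_n)$ is an acyclic free resolution of $A/J_x$ of length~$n$, so $\pd_A(J_x)\leq n-1$.

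For $(2)\Rightarrow(1)$, I would first reinterpret the independence hypothesis. As noted after Definition~\ref{def:Mindep_sequence}, the sequence $x$ is $A$-independent if and only if the canonical surjection $(A/J_x)^n\to J_x/J_x^2$, $e_i\mapsto \bar x_i$, is an isomorphism; equivalently, $J_x/J_x^2$ is a free $(A/J_x)$-module of rank $n$ with basis $\bar x_1,\dots,\bar x_n$. Combined with the hypothesis $\pd_A(J_x)<\infty$, Vasconcelos's theorem then produces a regular sequence $y_1,\dots,y_m$ generating $J_x$, and comparing minimal numbers of generators in the local ring $A$ forces $m=n$.

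It remains to transfer regularity from $y$ to $x$. Both $(x_1,\dots,x_n)$ and $(y_1,\dots,y_n)$ are minimal generating sequences of the ideal $J_x$ in a local ring, hence are related by a matrix $W=(w_{ij})\in\GL_n(A)$, invertible by Nakayama applied to the induced automorphism of the $\kappa(A)$-vector space $J_x/\mgo_A J_x$. Such a change of basis induces an isomorphism of Koszul complexes $K(y)\xrightarrow{\sim}K(x)$, so $H_i(K(x))\cong H_i(K(y))=0$ for $i\geq 1$: the sequence $x$ is Koszul-regular. Since $x_1,\dots,x_n\in\mgo_A$, Koszul-regularity implies regularity in the Noetherian local setting, which concludes the proof.

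The main obstacle lies inside the black-box appeal to Vasconcelos's theorem, whose proof proceeds by induction on $n$. The delicate case is $n=1$: showing that a nonzero $x\in\mgo_A$ with $\Ann_A(x)\subset (x)$ and $\pd_A((x))<\infty$ is automatically a non-zero-divisor. This is where the finite projective dimension hypothesis is essential, typically combined with the identity $\Ann_A(x)=x\cdot\Ann_A(x^2)$ (an immediate consequence of independence) and the Auslander--Buchsbaum formula applied to $A/(x)$ to rule out a nontrivial annihilator.
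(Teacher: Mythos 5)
The paper does not prove this proposition: it is quoted verbatim, with the citation \cite[Cor.\,1, p.\,311]{Vasconcelos_Ideals_Generated_By} serving as the proof. There is therefore no ``paper's own proof'' to compare against, only an external reference. That said, your reconstruction is sound and faithful to how the corollary is actually derived in Vasconcelos's paper.

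A few small points. For $(1)\Rightarrow(2)$: correct, though note that regular $\Rightarrow$ $A$-independent admits a very short direct argument (also alluded to in the commented source of this paper) and does not require the full quasi-regularity statement; and $\pd_A(J_x)\leq n-1$ should formally be read via $\pd_A(A/J_x)\leq n$ and the exact sequence $0\to J_x\to A\to A/J_x\to 0$. For $(2)\Rightarrow(1)$: the reinterpretation of $A$-independence as ``$\varphi_x:(A/J_x)^n\to J_x/J_x^2$ is an isomorphism, hence $J_x/J_x^2$ free of rank $n$ with basis $\bar x_i$'' is correct, and it hinges on the remark made right after Definition~\ref{def:Mindep_sequence} together with the observation (also made in Section~\ref{section:indep_seq}) that an $A$-independent sequence in $\mgo_A$ is automatically a \emph{minimal} generating system of $J_x$; you use this implicitly when you compare $m$ with $n$. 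The transfer of regularity from $y$ to $x$ via an invertible change of basis $W\in\GL_n(A)$ and the induced isomorphism $\Lambda^\bullet W:K(x)\to K(y)$ of Koszul complexes is exactly right, and the conclusion ``$H_i(K(x))=0$ for $i\geq 1$ and $x_i\in\mgo_A$ $\Rightarrow$ $x$ regular'' is the standard depth-sensitivity of the Koszul complex in the Noetherian local case. Finally, you are right to flag that the genuine mathematical content sits inside Vasconcelos's main theorem (free conormal module $+$ finite projective dimension $\Rightarrow$ generated by a regular sequence), in particular the $n=1$ step where finite projective dimension is what kills a nontrivial annihilator; the identity $\Ann_A(x)=x\cdot\Ann_A(x^2)$ you record is indeed an immediate consequence of $\Ann_A(x)\subset(x)$. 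As a derivation of the cited corollary from the cited theorem, your sketch is correct; as a self-contained proof it still black-boxes Vasconcelos's theorem, which you acknowledge.
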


\details{
La généralisation qui pourrait sembler naturelle, à savoir qu'en supposant $\pd_A(M)<+\infty$, une suite $(x_1,\dots, x_n)$ serait $M$-régulière ssi elle était $M$-indépendante et $\pd_A(J_xM)<+\infty$, est fausse. Voir l'exemple ci-dessous.
}

\noindent
This result implies in particular that the notions of $A$-independent sequence and of regular sequence coincide on a regular local ring $A$. This fails for $M$-independent sequences in general as the following example shows.

\begin{exemple}\rm
\label{ex:longer_Mindep_sequence}
 Let $A$ be a discrete valuation ring with uniformizer $\pi$ and let $M=A/(\pi^2)$. Then $(\pi)$ is $M$-independent, but it cannot be regular since $\prof_A(M)=0$.
\end{exemple}

In the next two propositions, we explore the relashionship between the existence of $M$-independent ideals and the freeness of $M$.

\begin{prop}
 \label{prop:free_implies_maxid_indep}
Let $A$ be a Noetherian local ring and let $M$ be an $A$-module.
\begin{enumerate}
 \item If $M/\mgo_A^2M$ is free over $A/\mgo_A^2$, then $\mgo_A$ is $M$-independent.
 \item If $M$ is free over $A$, then for any ideal $I$ of $A$, we have $R_I(M)\subset \mgo_AM$. In particular, the maximal ideal $\mgo_A$ is strongly $M$-independent.
\end{enumerate}
\end{prop}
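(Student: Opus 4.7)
My plan is to establish (2) first, and then deduce (1) from it by reducing modulo $\mgo_A^2$. The only non-trivial input needed is the elementary Nakayama-type fact that, in a Noetherian local ring, any relation $\sum_i x_i a_i = 0$ among the elements of a minimal system of generators $x_1,\dots, x_n$ of an ideal must have every coefficient $a_i$ in $\mgo_A$: for if some $a_i$ were a unit, $x_i$ would be expressible in terms of the remaining generators, contradicting minimality.

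For (2), I would fix a minimal system of generators $x_1,\dots, x_n$ of the given ideal $I$, a basis $(e_\alpha)_{\alpha \in S}$ of the free module $M$, and a relation $\sum_i x_i m_i = 0$ in $M$. Expanding each $m_i = \sum_\alpha a_{i\alpha} e_\alpha$ as a finite sum decomposes the relation coordinatewise into $\sum_i x_i a_{i\alpha} = 0$ in $A$ for every $\alpha \in S$. The Nakayama fact above then forces $a_{i\alpha} \in \mgo_A$ for all $i,\alpha$, i.e.\ $m_i \in \mgo_A M$. This proves $R_I(M) \subset \mgo_A M$ for every ideal $I$ of $A$; specialising to $I = \mgo_A^j$ for $j \geq 1$ then yields the strong $M$-independence of $\mgo_A$.

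For (1), I would apply part (2) to the Noetherian local ring $\bar A := A/\mgo_A^2$ and to the $\bar A$-module $\bar M := M/\mgo_A^2 M$, which is $\bar A$-free by hypothesis. Let $x_1,\dots, x_n$ be a minimal system of generators of $\mgo_A$; their classes $\bar x_1,\dots, \bar x_n$ form a minimal system of generators of the maximal ideal $\mgo_A/\mgo_A^2$ of $\bar A$. A relation $\sum x_i m_i = 0$ in $M$ reduces modulo $\mgo_A^2 M$ to $\sum \bar x_i \bar m_i = 0$ in $\bar M$, and part (2) applied to $\bar A$ and $\bar M$ then yields $\bar m_i \in (\mgo_A/\mgo_A^2)\bar M = \mgo_A M / \mgo_A^2 M$ for each $i$, so $m_i \in \mgo_A M + \mgo_A^2 M = \mgo_A M$. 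The one step requiring care is the Nakayama observation about minimal generators, but it is essentially a one-line argument, so I do not anticipate any real obstacle.
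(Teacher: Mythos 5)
Your proof is correct and takes essentially the same route as the paper: expand each $m_i$ in a basis of the free module, observe that the relation decomposes coordinatewise into relations among the minimal generators of $I$ in $A$, and invoke the Nakayama-type observation that such relations force all coefficients into $\mgo_A$. Your explicit deduction of (1) from (2) by passing to $A/\mgo_A^2$ is exactly what the paper leaves implicit with the phrase ``(1) is an immediate consequence of (2).''
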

\begin{proof}
(1) is an immediate consequence of (2). Let us prove (2).
Let $y_1, \dots, y_{\ell}$ be a minimal system of 
generators of $I$ and let
\[
 y_1m_1+\dots+y_{\ell}m_{\ell}=0
\]
be a relation in $M$. We have to prove that $m_j\in \mgo_AM$ for any $j$.
Let $(e_i)$ be a basis of $M$ and let~$\lambda_{ij}$ be the coefficient of 
$m_j$ along $e_i$. Then for any index $i$ we have
$\sum_j y_j\lambda_{ij}=0$. But the projections of the elements 
$y_j$ form a $\kappa(A)$-basis of $I/\mgo_AI$, hence 
$\lambda_{ij}\in\mgo_A$ for all $i,j$, which proves that $m_j\in\mgo_AM$.
\end{proof}

\begin{prop}
\label{prop:indep_implies_freeness}
 Let $A$ be a Noetherian local ring, $M$ an $A$-module and $I\subset \mgo_A$ an ideal of $A$. Assume that $M/IM$ is free over $A/I$.
\begin{enumerate}
\item If $I$ is $M$-independent, then $M/I^2M$ is free over $A/I^2$.
\item If $I$ is strongly $M$-independent, then $M$ is free over $A$.
\end{enumerate}
\end{prop}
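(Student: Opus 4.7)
For part (1), my plan is to fix a family $(e_\alpha)_\alpha$ in $M$ whose images form an $A/I$-basis of $M/IM$ and prove that their images in $M/I^2M$ form an $A/I^2$-basis. Spanning follows from the standard Nakayama-type computation: from $M=\sum Ae_\alpha+IM$ one deduces $IM\subset \sum Ie_\alpha+I^2M$, whence $M=\sum Ae_\alpha+I^2M$. For linear independence, consider a relation $\sum_\alpha a_\alpha e_\alpha\in I^2M$. Reducing modulo $IM$ and using the basis property forces $a_\alpha\in I$, so one writes $a_\alpha=\sum_i \lambda_{i\alpha}x_i$ for some minimal system of generators $(x_1,\dots,x_n)$ of $I$. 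Setting $m_i=\sum_\alpha \lambda_{i\alpha}e_\alpha$, the relation rewrites as $\sum_i x_i m_i\in I^2M$. Since $I^2M=\sum_i x_i\cdot IM$, one can write $\sum_i x_i m_i=\sum_i x_i\mu_i$ with $\mu_i\in IM$, producing the genuine relation $\sum_i x_i(m_i-\mu_i)=0$; the $M$-independence of $(x_i)$ then yields $m_i-\mu_i\in IM$, hence $m_i\in IM$. Reading coefficients in the basis $(\bar e_\alpha)$ forces $\lambda_{i\alpha}\in I$, hence $a_\alpha\in I^2$.

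For part (2), the plan is to bootstrap the argument of part (1) into an induction on $n\geq 1$, proving that the same lift $(e_\alpha)$ descends to an $A/I^n$-basis of $M/I^nM$ for every $n$. The base $n=1$ is the hypothesis. For the inductive step from $n$ to $n+1$, spanning follows from the analogous Nakayama computation $I^nM\subset \sum I^n e_\alpha + I^{2n}M\subset \sum Ae_\alpha + I^{n+1}M$ (valid since $2n\geq n+1$ for $n\geq 1$). For linear independence, a relation $\sum_\alpha a_\alpha e_\alpha\in I^{n+1}M$ forces $a_\alpha\in I^n$ by the induction hypothesis, so one writes $a_\alpha=\sum_j \lambda_{j\alpha}x_j^{(n)}$ for a minimal system of generators $(x_j^{(n)})$ of $I^n$ and sets $m_j=\sum_\alpha \lambda_{j\alpha}e_\alpha$. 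Exactly as in part (1), one uses the identity $I^{n+1}M=\sum_j x_j^{(n)}\cdot IM$ to convert $\sum_j x_j^{(n)} m_j\in I^{n+1}M$ into an honest relation $\sum_j x_j^{(n)}(m_j-\mu_j)=0$ with $\mu_j\in IM$, and the strong $M$-independence $R_{I^n}(M)\subset IM$ then gives $m_j\in IM$, hence $\lambda_{j\alpha}\in I$ and $a_\alpha\in I\cdot I^n=I^{n+1}$.

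To deduce that $M$ itself is free, consider the free module $F=A^{(\alpha)}$ and the $A$-linear map $f\colon F\to M$ sending the canonical basis to $(e_\alpha)$. The induction above shows that $f$ induces an isomorphism $F/I^nF\to M/I^nM$ for every $n\geq 1$. Surjectivity of $f$ then follows from $M=f(F)+IM$ together with Nakayama's lemma applied to $M/f(F)$, under the assumption that $M$ is of finite type (the main case of interest for the applications in this paper). Injectivity comes from $\ker f\subset \bigcap_n I^nF$, which vanishes because $F$ is free and each coordinate of such an element lies in $\bigcap_n I^n\subset \bigcap_n \mgo_A^n=0$ by Krull's intersection theorem.

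The crucial step, and the one that genuinely requires strong rather than ordinary $M$-independence, is the inductive step of part (2). What makes it go through is the identity $I^{n+1}M=\sum_j x_j^{(n)}\cdot IM$, which converts the containment $\sum_j x_j^{(n)}m_j\in I^{n+1}M$ into a true relation amenable to the hypothesis $R_{I^n}(M)\subset IM$; this delivers the sharp conclusion $m_j\in IM$, without which one could only control the coefficients $\lambda_{j\alpha}$ modulo $I^n$, which would be far too weak to force $a_\alpha\in I^{n+1}$.
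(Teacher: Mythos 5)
Your inductive argument is essentially the paper's: both proofs show by induction on $n$ that $M/I^nM$ is free over $A/I^n$, and the key mechanism in the inductive step is identical — writing the coefficients in terms of a minimal generating set of $I^n$, using $I^{n+1}M=I^n\cdot IM$ to convert the containment $\sum x_j^{(n)}m_j\in I^{n+1}M$ into a genuine relation, and then applying $R_{I^n}(M)\subset IM$ to conclude $m_j\in IM$.

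The only divergence is the final passage from ``$M/I^nM$ free over $A/I^n$ for all $n$'' to ``$M$ free over $A$''. The paper cites \cite[IV, 5.6]{SGA1}, a flatness criterion whose hypotheses include that $M$ be \emph{ideally separated} for $I$; you instead give a direct, elementary argument via a map $F\to M$ from a free module, proving surjectivity by Nakayama and injectivity via Krull's intersection theorem applied coordinatewise to $\bigcap_n I^nF$. Your route is more transparent, but as you correctly flag it needs $M$ of finite type (for Nakayama, and to ensure the system $(e_\alpha)$ is finite). This caveat is well-placed: some separatedness or finiteness condition is genuinely required here (for instance, over a non-complete DVR $A$ with $I=(\pi)$ and $M=\hat A$, the hypotheses of the proposition hold but $M$ is not $A$-free), and it is supplied in the paper by the ideal-separation hypothesis hidden in the SGA1 citation, while in your proof by the finite-type assumption. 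In all of the paper's applications $M$ is of finite type, so your version fully covers the intended use.
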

\begin{proof}
 We first prove by induction on $n\in \N$ that, if $R_{I^i}(M)\subset IM$ for any $i\in \{0,\dots, n\}$, then $M/I^{n+1}M$ is free over $A/I^{n+1}$. For $n=0$ there is nothing to prove. Assume that it holds for $n-1$, with $n\geq 1$. 
 Let $(e_i)_{i\in I}$ be a family of elements of~$M$, the images of which form an $A/I^n$-basis of $M/I^nM$. Then by Nakayama the $e_i$'s generate $M/I^{n+1}M$ over $A/I^{n+1}$. Let us prove that they 
actually form a basis. Let 
$(\lambda_i)$ be a finite family of elements of~$A$ such that $\sum\lambda_ie_i\in I^{n+1}M$. Since the images of the $e_i$'s form a basis of $M/I^nM$, we already know that $\lambda_i\in I^n$ for all~$i$, hence we can write $\lambda_i=\sum_{j=1}^\ell x_j\lambda_{ij}$ with $\lambda_{ij}\in A$, where $x_1,\dots, x_\ell$ is a minimal system of generators of $I^n$. On the other hand, since $\sum \lambda_ie_i\in I^{n+1}M$, there exist elements $m_j\in IM$ such that $\sum \lambda_ie_i=\sum_{j=1}^\ell x_jm_j$. We get the equality
\[
 \sum_{j=1}^\ell x_jm_j=\sum_{j=1}^\ell x_j\left(
    \sum_{i\in I}\lambda_{ij}e_i
 \right).
\]
Since $R_{I^n}(M)\subset IM$, it follows that for any $j$, the element $m_j-\sum_{i\in I}\lambda_{ij}e_i$ belongs to $IM$, hence so does $\sum_{i\in I}\lambda_{ij}e_i$. Since the $e_i$'s form an $A/I$-basis of $M/IM$, this implies that $\lambda_{ij}\in I$ for all $i$ and $j$, hence $\lambda_i\in I^{n+1}$. This concludes the proof of the claim. Now $(1)$ is the case $n=1$, and $(2)$ is a consequence of~\cite[IV, 5.6]{SGA1}.
\end{proof}

\details{
\begin{exemple}\rm
 If $I\subset J$ are two ideals of $A$, the submodules $R_I(M)$ and $R_J(M)$ are not comparable in general. For example let $A=k[x,y]/(x^4,y^4,x^2-xy, y^2-xy)$. Then we have
 \[
  R_{\mgo_A^i}(A)=\left\{
  \begin{array}{ll}
   \{0\} & \textrm{ if } i=0 \textrm{ or } i\geq 4, \\
   \mgo_A & \textrm{ if } i=1 \textrm{ or } i=3, \\
   (x-y)+\mgo_A^2 & \textrm{ if } i=2.
  \end{array}
  \right.
 \]
Les éléments suivants forment une $k$-base de $A$ : $1, x, y, x^2, x^3$. On a les systèmes minimaux suivants de générateurs :
\[
\mgo_A^0=(1) \quad \mgo_A=(x,y) \quad \mgo_A^2=(x^2) \quad \mgo_A^3=(x^3),
\]
et $\mgo_A^4$ est nul (engendré par l'ensemble vide).
Si $1.m=0$ alors $m=0$, d'où $R_{\mgo_A^0}(A)=\{0\}$. Si $xf_1+yf_2=0$, en réduisant modulo $\mgo_A^2$ on voit que $f_1, f_2\in \mgo_A$, donc $R_{\mgo_A}(A)\subset \mgo_A$. L'inclusion réciproque vient de $xy-yx=0$. Soit $f=a+bx+cy+dx^2+ex^3\in A$. Si $x^2f=0=ax^2+bx^3+cx^2y=ax^2+(b+c)x^3$, alors $a=0$ et $b+c=0$, d'où l'inclusion $R_{\mgo_A^2}(A)\subset (x-y,x^2)$. L'inclusion réciproque est immédiate. Si $x^3f=0=ax^3$ alors $a=0$ d'où $R_{\mgo_A^3}(A)\subset \mgo_A$. La réciproque est claire car $x^4=0$ et $x^3y=0$. 
\end{exemple}
}

\begin{remarque}\rm
 To conclude this section, let us list a few other elementary facts about independence. Since none of these facts will be used in this paper, we leave the (elementary) proofs to the reader.
 \begin{enumerate}
  \item Let $I$ be an ideal of $A$ that is contained in the Jacobson radical of $A$, such that $IM$ is a finite type $A$-module, and let $(x_1, \dots, x_n)$ be a 
nonempty sequence of elements of $I$ which is $M/I^2M$-independent. Then the 
sequence is $M$-independent, and $IM=(x_1, \dots, x_n)M$.
\details{
Preuve : For any $m\in IM$, we have $x_1m=0$ 
in the quotient $M/I^2M$, hence $m\in (x_1, \dots, x_n)M+I^2M$, so 
that $IM\subset (x_1, \dots, x_n)M+I^2M$. By Nakayama's Lemma this implies that $IM=(x_1, \dots, x_n)M$. Now 
if we have a relation $\sum x_im_i=0$ then we have the same relation in the 
quotient $M/I^2M$ hence $m_i\in (x_1, \dots, x_n)M$ modulo $I^2M$. Since 
$I^2M\subset (x_1,\dots, x_n)M$ the result follows.
}
\item  Let $A$ be a ring and let $x,y$ be two sequences of elements of $A$, that generate ideals $J_x$ and $J_y$. Let $M$ be an $A$-module.
 \begin{enumerate}
  \item The sequence $(x,y)$ is $M$-independent if and only if $x$ is $(M/J_yM)$-independent and $y$ 
is $(M/J_xM)$-independent.
\item If $J_x\subset J_y$ and $y$ is $(M/J_xM)$-independent, then $y$ is $M$-independent.
 \end{enumerate}
 \details{
 Preuve :
 (a) Assume that $(x,y)$ is $M$-independent. Let us prove that $x$ is 
$M/J_yM$-independent. Let $\sum x_im_i=0$ be a relation in $M/J_yM$, i.e. $\sum 
x_im_i\in J_yM$ so that there exist elements $m'_j$ such that $\sum x_im_i=\sum 
y_jm'_j$ in $M$. Since $(x,y)$ is $M$-independent, it follows that for all $i$, 
$m_i\in (J_x+J_y)M$, hence the image of $m_i$ in $M/J_yM$ belongs to 
$J_x(M/J_yM)$, as desired. Similarly $y$ is $M/J_xM$-independent.
Conversely assume that $x$ is $(M/\!J_yM)$-independent and $y$ 
is $(M/J_xM)$-independent. Let $\sum x_im_i+\sum y_jm'_j=0$ be a relation in $M$. 
Then $\sum x_im_i=0$ in $M/J_yM$, hence $m_i\in J_xM+J_yM$, and similarly 
$m'_j\in J_xM+J_yM$.

(b) Let $\sum y_jm_j=0$ be a relation in $M$. The $M/J_xM$-independence of $y$ implies that for any $j$ we have $m_j\in J_xM+J_yM=J_yM$.
 }
 \end{enumerate}
\end{remarque}

\section{\texorpdfstring{$M$}{M}-independent sequences and the Koszul complex}

As for regular sequences, the $M$-independence can be detected by the Koszul 
complex. Let us first recall notations and basic facts about the Koszul 
complex. Given a sequence 
$x_1, \dots, x_n$ of elements of $A$, let $E$ be the free module $A^n$ and let 
$(e_1,\dots, e_n)$ be its canonical basis. Then the Koszul complex on $x$, denoted by~$K(x)$, is the 
complex whose degree $\ell$ component $K_\ell(x)$ is the free module $\Lambda^\ell E$ 
with basis $\{e_{i_1}\wedge\dots \wedge e_{i_\ell}\}$ with $i_1<\dots<i_\ell$, and 
whose differential $d_\ell^x : K_\ell(x)\to K_{\ell-1}(x)$ is given by the following formula, where the hat means that the term $e_{i_j}$ is omitted.
\[
 d_\ell^x(e_{i_1}\wedge\dots \wedge e_{i_\ell})
=\sum_{j=1}^{\ell}(-1)^{j-1}x_{i_j}e_{i_1}\wedge \dots \wedge
\widehat{e_{i_j}} \wedge \dots \wedge e_{i_\ell}
\]
If $M$ is an $A$-module, we denote by $K(x,M)$ the complex 
$K(x)\otimes_A M$, and by~$d^{x,M}_\ell$ its $\ell$th differential. 
Since $\Lambda^\ell E=0$ for $\ell>n$, this complex has length~$n$. If~$A$ is 
local, it is well-known that the sequence~$x$ is $M$-regular if and only if the 
Koszul complex $K(x,M)$ is exact, except at its extreme right where we 
have $H_0(K(x,M))\simeq M/J_xM$ where $J_x$ denotes the ideal of $A$ 
generated by $x_1, \dots, x_n$. For $M$-independence we have the following 
analogous characterization.

\begin{prop}
\label{prop:Mindep_en_termes_Koszul}
 Let $A$ be a ring and let $x=(x_1, \dots, x_n)$ be a sequence of elements of 
$A$. Let $M$ be an $A$-module.
\begin{enumerate}
 \item If $x$ is $M$-independent, then for any $\ell\in \{1, \dots, n\}$ we 
have
\[
 \Ker d^{x,M}_{\ell} \subset J_x.K_{\ell}(x,M).
\]
More generally, for any submodule $N\subset M$, we have
\[
 (d^{x,M}_{\ell})^{-1}(J_xK_{\ell-1}(x,N)) \subset 
K_{\ell}(x,N)+J_xK_{\ell}(x,M)
\]
\item Conversely, assume that
\[
 \Ker d^{x,M}_{1}\subset J_x.K_{1}(x,M),
\]
then the sequence $x$ is $M$-independent.
\end{enumerate}
\end{prop}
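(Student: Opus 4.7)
The plan is to treat part~(2) as a direct unpacking of definitions and to prove part~(1) by an explicit coefficient-by-coefficient argument in which the $M$-independence of $x$ is invoked once for each $(\ell-1)$-subset.

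For (2), $K_1(x,M)$ identifies canonically with $M^n$, the differential reads $d^{x,M}_1(m_1,\dots,m_n)=\sum x_im_i$, and $J_xK_1(x,M)=(J_xM)^n$. The inclusion $\Ker d^{x,M}_1\subset J_xK_1(x,M)$ therefore says precisely that any relation $\sum x_im_i=0$ forces every $m_i\in J_xM$, which is the definition of $M$-independence.

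For (1), I would write $\omega=\sum_{|I|=\ell}m_Ie_I$ and compute, for each $(\ell-1)$-subset $J\subset\{1,\dots,n\}$, the coefficient of $e_J$ in $d^{x,M}_\ell(\omega)$. Since $e_J$ occurs in $d(e_I)$ only when $I=J\cup\{i\}$ for some $i\notin J$, with coefficient $(-1)^{p(i,J)-1}x_i$ where $p(i,J)$ is the position at which $i$ inserts into $J$, this coefficient equals $\sum_{i\notin J}\varepsilon(i,J)\,x_i\,m_{J\cup\{i\}}$ with $\varepsilon(i,J)\in\{\pm1\}$.

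Under the hypothesis $d^{x,M}_\ell(\omega)\in J_xK_{\ell-1}(x,N)$, this coefficient lies in $J_xN$, so it can be written as $\sum_{k=1}^n x_kn_k^{(J)}$ with $n_k^{(J)}\in N$. Rearranging yields a relation $\sum_k x_k\mu_k=0$ in $M$, with $\mu_k=\varepsilon(k,J)\,m_{J\cup\{k\}}-n_k^{(J)}$ if $k\notin J$ and $\mu_k=-n_k^{(J)}$ if $k\in J$. The $M$-independence of $x$ then gives $\mu_k\in J_xM$ for every $k$; in particular, for $k\notin J$, $m_{J\cup\{k\}}\in\varepsilon(k,J)n_k^{(J)}+J_xM\subset N+J_xM$. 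Since every $\ell$-subset $I$ of $\{1,\dots,n\}$ arises as $J\cup\{k\}$ for some $(\ell-1)$-subset $J$ and some $k\notin J$, we conclude $m_I\in N+J_xM$ for all $I$, i.e.\ $\omega\in K_\ell(x,N)+J_xK_\ell(x,M)$. The first assertion of (1) is the special case $N=0$ (in which the coefficient $\sum\varepsilon(i,J)x_im_{J\cup\{i\}}$ actually vanishes).

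The only mildly delicate point is the bookkeeping for the signs $\varepsilon(i,J)$, but no cancellation or finer structure of the Koszul complex is needed: one invocation of $M$-independence per $(\ell-1)$-subset $J$ already controls every coefficient $m_{J\cup\{i\}}$ modulo $N+J_xM$, and varying $J$ exhausts all $\ell$-subsets. The presence of $N$ is handled simply by absorbing its contribution into the relation before applying independence.
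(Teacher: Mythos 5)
Your proof is correct and follows essentially the same route as the paper: both identify $K_\ell(x,M)$ with $M$-valued tuples indexed by $\ell$-subsets, extract, for each $(\ell-1)$-subset $J$, the coefficient of $e_J$ in $d^{x,M}_\ell(\omega)$ as a signed combination $\sum_{i\notin J}\varepsilon(i,J)x_i m_{J\cup\{i\}}$, and apply $M$-independence once per $J$ to control all $m_I$ with $J\subset I$. You spell out the intermediate step — writing the element of $J_xN$ as $\sum_k x_k n_k^{(J)}$ and absorbing it into the relation before invoking independence — which the paper leaves implicit under ``Using the $M$-independence of the sequence $x$ this implies\dots''; otherwise the two arguments are the same.
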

\begin{proof}
(1) Let $e_1, \dots, e_n$ be the canonical basis of $A^n$. Let $I_\ell$ denote 
the set of $\ell$-uplets $(i_1, \dots, i_\ell)$ such that $1\leq i_1<\dots 
<i_\ell \leq n$. For $i=(i_1, \dots, i_\ell)$ let $f_i=e_{i_1}\wedge \dots 
\wedge e_{i_\ell}$. Then $(f_i)_{i\in I_\ell}$ is a basis of 
$\Lambda^{\ell}(A^n)$. We use this basis to identify $K_\ell(x)$ with 
$A^{I_\ell}$ and $K_\ell(x,M)$ with $M^{I_\ell}$. Now let $m=(m_i)_{i\in 
I_\ell}$ be an element of $K_\ell(x,M)$ such that $d^{x,M}_\ell(m)\in 
J_xK_{\ell-1}(x,N)$. In particular for any $i=(i_1, \dots, i_{\ell-1})\in 
I_{\ell-1}$ the component of $d^{x,M}_\ell(m)\in M^{I_{\ell-1}}$ with label $i$ 
belongs to $J_x N$. But this component can be written:
\[
\sum_{j=1}^{i_1-1} x_jm_{(j,i_1,\dots, i_{\ell-1})}
-\sum_{j=i_1+1}^{i_2-1} x_jm_{(i_1,j,i_2,\dots, i_{\ell-1})}
+\dots
+(-1)^{\ell-1}\sum_{j=i_{\ell-1}+1}^{n} x_jm_{(i_1,\dots, i_{\ell-1},j)}
\]
Using the $M$-independence of the sequence $x$ this implies that for any $i\in 
I_{\ell}$, $m_i\in N+J_xM$. The result follows.

(2) Identifying $K_1(x,M)$ with $M^n$, the morphism $d_1^{x,M}$ maps
$(m_1, \dots, m_n)$ to $\sum x_im_i$, so that the inclusion $\Ker 
d^{x,M}_{1}\subset J_x.K_{1}(x,M)$ is by definition equivalent to the 
$M$-independence of $x$.
\details{Brouillon et détails du calcul en C6 p.60v.} 
\end{proof}

\begin{cor}
\label{cor:pseudo-homotopies}
 Let $M$ be an $A$-module and $x$ an $M$-independent sequence. Then for any 
morphism $f : N \to M$ of $A$-modules, and any morphism $\varphi : 
K_{\ell-1}(x)\to M$ such that
\(
 \varphi\circ d^{x}_\ell=f\circ \mu_{\ell}
\)
for some morphism $\mu_{\ell} \in J_x\Hom(K_{\ell}(x),N)$, there exists a 
morphism $h_{\ell-1} : K_{\ell-1}(x)\to N$ such that $\varphi=f\circ 
h_{\ell-1}$ modulo $J_xM$.
\[
\xymatrix{
K_{\ell}(x) \ar[r]^{d^{x}_{\ell}} \ar[d]_{\mu_{\ell}} &
K_{\ell-1}(x) \ar[d]^{\varphi} \ar@{.>}[ld]^{\exists h_{\ell-1}}\\
N\ar[r]_{f} &M
}
\]
\end{cor}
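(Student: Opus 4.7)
The plan is to verify the conclusion coordinate by coordinate on the canonical basis $(f_i)_{i \in I_{\ell-1}}$ of $K_{\ell-1}(x)$. Setting $\varphi_i := \varphi(f_i) \in M$, it suffices to show $\varphi_i \in f(N) + J_x M$ for every $i \in I_{\ell-1}$: then for each $i$ one picks $h_i \in N$ with $f(h_i) \equiv \varphi_i \pmod{J_x M}$, and by freeness of $K_{\ell-1}(x)$ the assignment $f_i \mapsto h_i$ extends to a morphism $h_{\ell-1} : K_{\ell-1}(x) \to N$ satisfying $\varphi \equiv f \circ h_{\ell-1} \pmod{J_x M}$, as required.

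To produce these congruences, I would unpack the hypothesis. Since $K_\ell(x)$ is free, the condition $\mu_\ell \in J_x \Hom(K_\ell(x), N)$ is equivalent to $\mu_j := \mu_\ell(f_j) \in J_x N$ for every $j \in I_\ell$, so one may write $\mu_j = \sum_{t=1}^n x_t \nu_{jt}$ with $\nu_{jt} \in N$. Evaluating $\varphi \circ d^x_\ell = f \circ \mu_\ell$ at $f_j$ for $j = (j_1 < \cdots < j_\ell)$ produces the relation
\[
\sum_{k=1}^\ell (-1)^{k-1} x_{j_k}\, \varphi_{j \setminus j_k} \;=\; \sum_{t=1}^n x_t\, f(\nu_{jt})
\]
in $M$. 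Regrouping by $x_t$ yields $\sum_{t=1}^n x_t \eta_{j,t} = 0$ where $\eta_{j,t} = (-1)^{k-1} \varphi_{j \setminus j_k} - f(\nu_{j,j_k})$ when $t = j_k \in \{j_1, \ldots, j_\ell\}$, and $\eta_{j,t} = -f(\nu_{jt})$ otherwise. The defining property of $M$-independence (Definition~\ref{def:Mindep_sequence}) applied to this relation forces $\eta_{j,t} \in J_x M$ for each $t$; specializing to $t = j_k$ gives $\varphi_{j \setminus j_k} \in f(N) + J_x M$.

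To conclude, it remains to observe that every $i \in I_{\ell-1}$ arises as $j \setminus j_k$ for suitable $j \in I_\ell$ and $k$: any $s \in \{1,\dots,n\} \setminus \{i_1,\dots,i_{\ell-1}\}$ can be inserted into $i$ to produce such a $j$ (the existence of $s$ uses $\ell \le n$, which is tacit since otherwise $K_\ell(x)=0$ and the statement is vacuous). This yields $\varphi_i \in f(N) + J_x M$ for every $i \in I_{\ell-1}$, completing the argument. The only delicate point is the sign and index bookkeeping; structurally the corollary is a direct application of the definition of $M$-independence, and the more refined Proposition~\ref{prop:Mindep_en_termes_Koszul} is not needed here.
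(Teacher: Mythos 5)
Your proof is correct, and it takes a parallel but more hands-on route than the paper's. The paper's proof invokes the self-duality of the Koszul complex to identify $\Hom(K(x),M)$ with $K(x,M)$, and then cites the ``more generally'' clause of Proposition~\ref{prop:Mindep_en_termes_Koszul}(1) applied to the submodule $\Im f\subset M$. You instead unpack that identification explicitly: you evaluate $\varphi\circ d^x_\ell=f\circ\mu_\ell$ on each basis vector of $\Lambda^\ell(A^n)$, regroup the resulting relation by the $x_t$, and feed it straight into Definition~\ref{def:Mindep_sequence}. The computation is the same in substance --- indeed it essentially reproduces, in Koszul-dual form, the coordinate argument in the proof of Proposition~\ref{prop:Mindep_en_termes_Koszul}(1) --- but your version is self-contained and makes transparent exactly which relations the $M$-independence hypothesis is applied to, whereas the paper's is shorter and highlights the corollary as the self-dual twin of the proposition. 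One small inaccuracy worth flagging: you dismiss $\ell>n$ as vacuous because $K_\ell(x)=0$, but for $\ell=n+1$ the source $K_{\ell-1}(x)\cong A$ is nonzero while the hypotheses on $\varphi$ become empty, and the conclusion can genuinely fail (take $x$ the empty sequence, $N=0$, $M\neq J_xM$). The corollary is implicitly meant for $1\le\ell\le n$; the paper carries the same tacit restriction, since Proposition~\ref{prop:Mindep_en_termes_Koszul}(1) is stated only for Koszul indices in $\{1,\dots,n\}$, so this is not a defect specific to your argument --- just be precise about the range of $\ell$ if you write this up.
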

\begin{proof}
 Using the self-duality of the Koszul complex (see 
e.g.~\cite[17.15]{Eisenbud_Commutative_Algebra_With}) there is an isomorphism 
of complexes $K(x,M)\simeq \Hom(K(x), M)$. Hence we can 
apply~\ref{prop:Mindep_en_termes_Koszul} (1) to the complex $\Hom(K(x), M)$ and 
the submodule $N':=\Im f\subset M$. Since $\varphi\circ d^{x}_{\ell}\in 
J_x\Hom(K_{\ell}(x),N')$, it follows that $\varphi$ belongs to 
$\Hom(K_{\ell-1}(x),N')+J_x\Hom(K_{\ell-1}(x),M)$, in other words there is a 
morphism $h : K_{\ell-1}(x)\to N'$ 
such that $\varphi=h$ modulo $J_xM$. Since $K_{\ell-1}(x)$ is free, $h$ lifts
to a morphism $h_{\ell-1} : K_{\ell-1}(x)\to N$ such that $\varphi=f\circ 
h_{\ell-1}$ modulo $J_xM$.
\end{proof}

\begin{cor}
\label{cor:homotopies}
Let $M_{\bullet}$ be a complex of $A$-modules and let $x=(x_1,\dots, x_n)$ be a sequence of elements of $A$ that is $M_\ell$-independent for any $\ell$. Then any 
morphism of complexes $\varphi : K(x)\to M_{\bullet}$ such that $\varphi_n : K_n(x)\to M_n$ takes its values in $J_xM_n$, is homotopic to a morphism of complexes with values in $J_xM_{\bullet}$.
\end{cor}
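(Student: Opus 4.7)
The plan is to build the homotopy one degree at a time, by downward induction on the degree, using Corollary~\ref{cor:pseudo-homotopies} at each step. At stage $\ell$ (running from $\ell=n$ down to $\ell=0$) I would maintain the invariant: a chain map $\varphi^{(\ell)}:K(x)\to M_\bullet$ homotopic to $\varphi$ such that $\varphi^{(\ell)}_k(K_k(x))\subset J_xM_k$ for every $k\geq \ell$. The base case $\ell=n$ is simply $\varphi^{(n)}=\varphi$, which satisfies the invariant by the hypothesis on $\varphi_n$; the final object $\varphi^{(0)}$ produced by the induction is the desired morphism.

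For the inductive step, assume $\varphi^{(\ell)}$ meets the invariant. Since $K_\ell(x)$ is finitely generated free and $\varphi^{(\ell)}_\ell$ takes values in $J_xM_\ell$, we may view $\varphi^{(\ell)}_\ell$ as an element of $J_x\Hom(K_\ell(x),M_\ell)$. The chain-map identity $\varphi^{(\ell)}_{\ell-1}\circ d^x_\ell=d^M_\ell\circ\varphi^{(\ell)}_\ell$ then fits exactly the hypothesis of Corollary~\ref{cor:pseudo-homotopies}, applied with $M=M_{\ell-1}$, $N=M_\ell$, $f=d^M_\ell$, and $\mu_\ell=\varphi^{(\ell)}_\ell$, invoking that $x$ is $M_{\ell-1}$-independent. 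It furnishes a morphism $h_{\ell-1}:K_{\ell-1}(x)\to M_\ell$ with $\varphi^{(\ell)}_{\ell-1}\equiv d^M_\ell\circ h_{\ell-1}\pmod{J_xM_{\ell-1}}$.

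Setting $\varphi^{(\ell-1)}:=\varphi^{(\ell)}-d^Mh-hd^x$, where $h$ is the partial homotopy whose only nonzero component is this $h_{\ell-1}$, only the degrees $\ell-1$ and $\ell$ are affected: $\varphi^{(\ell-1)}_{\ell-1}=\varphi^{(\ell)}_{\ell-1}-d^M_\ell h_{\ell-1}$ lies in $J_xM_{\ell-1}$ by construction, while $\varphi^{(\ell-1)}_\ell=\varphi^{(\ell)}_\ell-h_{\ell-1}\circ d^x_\ell$ still lies in $J_xM_\ell$ because the defining formula of the Koszul differential gives $d^x_\ell(K_\ell(x))\subset J_xK_{\ell-1}(x)$. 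This last check is the main subtle point of the argument: without the observation that the Koszul differentials factor through $J_xK_{\ell-1}(x)$, introducing the new homotopy term at degree $\ell-1$ would destroy the progress already achieved at degree $\ell$. Summing the partial homotopies from the $n$ inductive stages produces the desired single homotopy from $\varphi$ to $\varphi^{(0)}$, which takes values in $J_xM_\bullet$ throughout.
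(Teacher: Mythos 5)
Your proof is correct and follows essentially the same strategy as the paper: build the homotopy components $h_{n-1},\dots,h_0$ by downward induction via Corollary~\ref{cor:pseudo-homotopies}, using the chain-map identity and the key observation that $d^x_\ell$ has image in $J_xK_{\ell-1}(x)$ so the term $h_{\ell-1}d^x_\ell$ does not spoil the invariant at degree $\ell$. The paper constructs all the $h_i$ first and then assembles $\psi_i=\varphi_i-f_{i+1}h_i-h_{i-1}d^x_i$, whereas you modify $\varphi$ stage by stage, but these are the same argument in a slightly different packaging.
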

\begin{proof}
 Let $f_i : M_i\to M_{i-1}$ denote the differential of $M_{\bullet}$. Using~\ref{cor:pseudo-homotopies}, we can construct inductively morphisms $h_{n-1}, \dots, h_0$ with $h_i : K_i(x)\to M_{i+1}$ such that for any $i$ the morphism $\varphi_i-f_{i+1}h_i$ takes its values in $J_xM_i$. Then the collection of morphisms $\psi_i:=\varphi_i-f_{i+1}h_i-h_{i-1}d_i^x$ defines a morphism of complexes $\psi : K(x)\to M_{\bullet}$ that is homotopic to $\varphi$ and takes its values in $J_xM_{\bullet}$.
\end{proof}

The following Theorem improves~\cite[1.2]{Simon_Strooker_Wiebe}. In the statement and its proof, if $x=(x_1,\dots, x_n)$ is a sequence of elements, we will still denote by the symbol $x$ the row matrix $(x_1,\dots, x_n)$, and by $x^T$ its transpose.

\begin{thm}
 \label{thm:liaison_suites_Mindep}
Let $A$ be a ring and $M$ an $A$-module. Let $x=(x_1, \dots, x_n)$ and $u=(u_1, \dots, u_n)$ be two sequences of elements of $A$, that generate ideals $J_x\subset J_u$. Assume that $x$ is $M$-independent. Then:
\begin{enumerate}
 \item The sequence $u$ is $M$-independent as well.
 \details{
 If moreover $W$ has its entries in $J_u$, then $J_u$ is also $M/J_xM$-independent. En effet : si $u.\mu=\in J_xM$, alors il existe $\mu'$ tel que $u.\mu=x.\mu'=uW.\mu'$, d'où $u.(\mu-W\mu')=0$ donc $\mu-W\mu'$ est à coefficients dans $J_uM$ par $M$-independance de $u$, donc $\mu$ aussi.
 }
\item Let $W$ be any square matrix with entries in $A$ 
such that $x=uW$ (since $J_x\subset J_u$ there exist such matrices) and let 
$\Delta$ be its determinant. Then we have the following equalities of 
submodules of $M$:
\begin{align*}
 (J_xM:J_u)=(J_x+(\Delta)) M \\
(J_xM: (J_x+(\Delta))) =J_u M
\end{align*}
and we have equalities of ideals of $A$:
\begin{align*}
 \Ann_A(J_uM/J_xM)= \Ann_A(M/(J_x+(\Delta))M) \\
\Ann_A((J_x+(\Delta))M/ J_xM) =\Ann_A(M/J_u M)
\end{align*}

\item If moreover $\Ann_A(M/J_xM)=J_x$ (i.e. $M/J_xM$ is a faithful $A/J_x$-module), then the sequences 
$x$ and $u$ are $A$-independent. In particular:
\begin{align*}
(J_x:J_u)=J_x+(\Delta) &=\Ann_A(M/(J_x+(\Delta))M) \\
(J_x: J_x+(\Delta)) =J_u&=\Ann_A(M/J_u M)
\end{align*}
Moreover the Fitting ideal of $J_u/J_x$ over the ring $\ov{A}=A/J_x$ is 
generated by the image of $\Delta$:
\[
 \Fit_{\ov{A}}(J_u/J_x)=(\ov{\Delta})
\]

\item Assume that $M$ is nonzero and that one of the following 
holds:
\begin{enumerate}
 \item $J_u$ is nilpotent, or
\item $M$ is of finite type over $A$ and $J_u\subset \Jac(A)$, or
\item $\Ann_A(M/J_xM) = J_x$ and $J_u\neq A$.
\end{enumerate}
Then $\Delta \notin J_x$, and $\mu_A(J_u)=\mu_A(J_x)=n$.
\end{enumerate}
\end{thm}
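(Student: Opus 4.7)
The plan is to prove the assertions in the order: the second displayed equality of~(2), then~(1), then the first displayed equality of~(2), and finally~(3) and~(4). The starting point is the adjugate identity $W \cdot \mathrm{adj}(W) = \Delta I_n$, which yields $\Delta u_j = \sum_i (\mathrm{adj}\,W)_{ji}\,x_i$ for every $j$, so that $\Delta J_u \subset J_x$; and the chain map of Koszul complexes $\Phi = \wedge^{\bullet} W \colon K(x) \to K(u)$ induced by $W$, which has $\Phi_0 = \id$ and $\Phi_n$ equal to multiplication by $\Delta$.

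First I would establish $(J_xM : (J_x+(\Delta))) = J_uM$. The inclusion $\supset$ is immediate from $\Delta J_u \subset J_x$. For the converse, given $m \in M$ with $\Delta m \in J_xM$, consider the morphism of complexes $\varphi \colon K(x) \xrightarrow{\Phi} K(u) \xrightarrow{-\otimes m} K(u,M)$; its degree-$n$ component sends $1$ to $\Delta m \in J_xM = J_x K_n(u,M)$. Since $x$ is $M$-independent, it is $K_\ell(u,M)$-independent for every~$\ell$ (as $K_\ell(u,M) \simeq M^{\binom{n}{\ell}}$), so~\ref{cor:homotopies} produces a homotopic chain map with values in $J_x K(u,M)$. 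The degree-$0$ comparison yields $\alpha_j \in M$ with $m - \sum_j u_j \alpha_j \in J_xM$, hence $m \in J_uM$. Assertion~(1) now follows swiftly: given $\sum_j u_j m_j = 0$, multiplying by $\Delta$ and using the adjugate identity yields $\sum_i x_i\bigl(\sum_j (\mathrm{adj}\,W)_{ji}\,m_j\bigr) = 0$; the $M$-independence of $x$ gives $(\mathrm{adj}\,W)^T(m_j)_j \in (J_xM)^n$, and multiplying by $W^T$ produces $\Delta m_j \in J_xM$, whence $m_j \in J_uM$ by the preceding equality.

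For the first equality of~(2), the inclusion $\supset$ is again immediate. For the converse, take $m \in (J_xM : J_u)$, write $u_j m = \sum_i x_i a_{ij}$, and substitute $x_i = \sum_k w_{ik} u_k$. Using~(1), one obtains the matrix congruence $W^T a \equiv mI \pmod{J_uM}$ with $a = (a_{ij})$, and multiplying on the left by $\mathrm{adj}(W^T)$ gives $\Delta a_{ij} \equiv m\,(\mathrm{adj}\,W)_{ji} \pmod{J_uM}$. The argument is completed by a second application of~\ref{cor:homotopies}, now using the ``Cramer-transposed'' chain map $\tilde\Phi \colon K(u) \to K(x)$ produced by Koszul self-duality (satisfying $\tilde\Phi_0 = \Delta \cdot \id$, $\tilde\Phi_n = \id$, and $\Phi \circ \tilde\Phi = \Delta \cdot \id_{K(u)}$): combined with the congruence above, this extracts $\mu \in M$ with $m \equiv \Delta \mu \pmod{J_xM}$, i.e.\ $m \in (J_x+(\Delta))M$. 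The annihilator equalities in~(2) then follow formally from the submodule ones.

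Assertion~(3) is obtained by specialising~(2) to $M = A$: the faithfulness hypothesis converts $M$-independence into $A$-independence, since a relation $\sum x_i a_i = 0$ in $A$ yields $a_i m \in J_xM$ for every $m \in M$ (by $M$-independence of $x$), whence $a_i \in \Ann_A(M/J_xM) = J_x$, and similarly for $u$ via~(1); the Fitting-ideal computation then follows from the presentation of $J_u/J_x$ over $A/J_x$ by the matrix $\bar W$. For~(4), if $\Delta \in J_x$ then $\Delta m \in J_xM$ for every $m \in M$, so the second equality of~(2) forces $J_uM = M$; each of the hypotheses~(a),~(b),~(c) then contradicts $M \neq 0$ via nilpotence or Nakayama, and the minimality $\mu_A(J_u) = \mu_A(J_x) = n$ follows from $\Delta \notin J_x$. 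The hardest step is the converse inclusion of the first equality in~(2): the matrix congruence above only delivers information modulo $J_uM$, whereas the conclusion requires information modulo $J_xM$, so the correct setup of the $\tilde\Phi$-side application of~\ref{cor:homotopies} is the delicate point.
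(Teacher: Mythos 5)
Your overall strategy — reducing everything to the Koszul chain map $\Phi = \Lambda^{\bullet}W : K(x)\to K(u)$ and the homotopy corollary~\ref{cor:homotopies} — is the same as the paper's, and two of your steps are genuinely cleaner. Your proof of the equality $(J_xM:(J_x+(\Delta)))=J_uM$ is essentially the paper's (apply~\ref{cor:homotopies} to $m\cdot\!\circ\Phi : K(x)\to K(u,M)$, noting that $M$-independence of $x$ gives $K_\ell(u,M)\simeq M^{\binom{n}{\ell}}$-independence). Your derivation of~(1) from that equality plus the adjugate identity ($\Delta\sum u_jm_j=0$, expand, use $M$-independence of $x$, multiply by $W$ to get $\Delta m_j\in J_xM$, conclude $m_j\in J_uM$) is shorter and more direct than the paper's, which instead builds the degree-$1$ chain map $\delta^{\mu}: K(u)\to K(u,M)$ from a relation and runs~\ref{cor:homotopies} on the full diagram. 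Parts~(3) and~(4) match the paper.

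The genuine gap is exactly where you flag one: the inclusion $(J_xM:J_u)\subset (J_x+(\Delta))M$. Your plan is to produce a ``Cramer-transposed'' chain map $\tilde\Phi : K(u)\to K(x)$ via Koszul self-duality and then make ``a second application of~\ref{cor:homotopies}'' together with the congruence $\Delta a\equiv m\,\mathrm{adj}(W)\ (\mathrm{mod}\ J_uM)$. But~\ref{cor:homotopies} only ingests a \emph{morphism of complexes} out of $K(x)$; what you have in hand is the data $\varphi_0=m\cdot$, $\varphi_1=(a_{ij})$, and these cannot in general be completed to a chain map $K(u)\to K(x,M)$ because the obstruction at degree~$2$ lies in $\Ker d_1^{x,M}/\Im d_2^{x,M}$, which vanishes under $M$-\emph{regularity} of $x$ but not under $M$-independence. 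Composing with $\tilde\Phi$ does not cure this: you would need a chain map to compose. Moreover, your congruence is only modulo $J_uM$, while the target is modulo $J_xM$, a discrepancy you note yourself but do not resolve. The paper does \emph{not} use $\tilde\Phi$ or~\ref{cor:homotopies} here; instead it runs a bespoke induction constructing morphisms $f_\ell : K_\ell(u)\to K_\ell(x,M)$ that are \emph{not} components of a chain map, but satisfy two congruences~(A) $f_\ell d_{\ell+1}^u\equiv 0$ and~(B) $f_\ell\Lambda^{\ell}W\equiv m\,\Id$ modulo~$J_x$; the inductive step uses $M$-independence to write $f_{\ell-1}d_\ell^u=\sum x_i\mu_i$ and reassembles the components of the $\mu_i$ combinatorially into $f_\ell$, before verifying (A) and (B) for $f_\ell$ by another application of $M$-independence. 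This construction is the real content of the hardest step, and nothing in your sketch supplies it. As written, your proof of this inclusion is incomplete.
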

\begin{proof}
Let $W$ be a 
matrix as in (2).
Let us prove (1). Let $\mu=(m_1, \dots, m_n)^T\in M^n$ such that $u.\mu=0$. We 
have to prove that $\mu\in J_uM^n$. For each $\ell$ we denote by $\delta^{\mu}_\ell : 
\Lambda^\ell(A^n) \to (\Lambda^{\ell+1}(A^n))\otimes_A M$ the morphism defined by 
$a\mapsto a\wedge \mu$. More precisely it is defined by $a\mapsto \sum_{i=1}^n 
(a\wedge e_i) \otimes m_i$ where $(e_1, \dots, e_n)$ is the canonical basis of 
$A^n$. This defines a morphism of complexes $K(u)\to K(u,M)$ of degree 1. It is indeed a morphism of complexes because $u.\mu=0$, as the computation below shows. For any $\ell$ and any integers $1\leq i_1<\dots 
<i_\ell \leq n$, with the notations $\eps_j=e_{i_1}\wedge \dots \wedge \widehat{e_{i_j}} \wedge \dots \wedge e_{i_\ell}$ and $\eps=e_{i_1}\wedge \dots
 \wedge e_{i_{\ell}}$, we have 
\begin{align*}
 \delta_{\ell-1}^{\mu}(d_{\ell}^{u}(\eps))
 &= \delta_{\ell-1}^{\mu}
   \left(
     \sum_{j=1}^{\ell}(-1)^{j-1}u_{i_j} \eps_j
   \right) \\
 &=\sum_{j=1}^{\ell} (-1)^{j-1}u_{i_j}
    \left((\eps_j \wedge e_{i_j})\otimes m_{i_j}+\sum_{\substack{\alpha=1 \\ \alpha\notin\{i_1,\dots, i_{\ell}\}}}^n
     (\eps_j
    \wedge e_{\alpha})\otimes m_{\alpha}
    \right)
\end{align*}
On the other hand, we have
\begin{align*}
 d_{\ell+1}^{u,M}(\delta_{\ell}^{\mu}(\eps))
  &= \sum_{\substack{\alpha=1 \\ \alpha\notin\{i_1,\dots, i_{\ell}\}}}^n
    d_{\ell+1}^u(\eps\wedge e_{\alpha})\otimes m_{\alpha} \\
  &= \sum_{\alpha\notin\{i_1,\dots, i_{\ell}\}}
     \left(
    \sum_{j=1}^{\ell} (-1)^{j-1}u_{i_j} (\eps_j\wedge e_{\alpha})\otimes m_{\alpha} +(-1)^{\ell}u_{\alpha}\eps\otimes m_{\alpha}
    \right)
\end{align*}
Combining both equations, we get
\begin{align*}
  \delta_{\ell-1}^{\mu}(d_{\ell}^{u}(\eps))
  - d_{\ell+1}^{u,M}(\delta_{\ell}^{\mu}(\eps))
  &= \sum_{j=1}^{\ell} (-1)^{j-1}u_{i_j}
    (\eps_j \wedge e_{i_j})\otimes m_{i_j}
    -\!\!\!\!\!\!\sum_{\alpha\notin\{i_1,\dots, i_{\ell}\}}\!\!\!\!\!\!
     (-1)^{\ell}u_{\alpha}\eps\otimes m_{\alpha}\\
  &= \sum_{j=1}^{\ell} (-1)^{j-1+\ell-j}u_{i_j}
       \eps\otimes m_{i_j}
     - \!\!\!\!\!\!\sum_{\alpha\notin\{i_1,\dots, i_{\ell}\}}\!\!\!\!\!\!
     (-1)^{\ell}u_{\alpha}\eps\otimes m_{\alpha}\\
  &= (-1)^{\ell-1}\eps\otimes\left(\sum_{\alpha=1}^n u_{\alpha}m_{\alpha}\right) \\
  &= (-1)^{\ell-1}\eps\otimes (u.\mu) \\
  &= 0
\end{align*}
This concludes the proof of the fact that $\delta^{\mu} : K(u)\to K(u,M)$ is a morphism of complexes of degree 1.
The matrix $W$ also defines a morphism of complexes $K(x)\to 
K(u)$ of degree 0. Thus we have a commutative diagram:
\[
 \xymatrix@C=1.43pc{
K_n(x) \ar[r]^{d_n^x}     \ar[d]_{\Delta=\Lambda^nW}      &
K_{n-1}(x) \ar[r]^{d_{n-1}^x} \ar[d]_{\Lambda^{n-1}W}  &
K_{n-2}(x) \ar[r] \ar[d]_{\Lambda^{n-2}W}       &
\dots \ar[r]                &
K_{1}(x) \ar[r]^{d_1^x}   \ar[d]_{W}    &                         
K_{0}(x)   \ar@{=}[d]   \\                          
K_n(u) \ar[r]^{d_n^u}     \ar[d]_{\delta^{\mu}_{n}}      &
K_{n-1}(u) \ar[r]^{d_{n-1}^u} \ar[d]_{\delta^{\mu}_{n-1}}  &
K_{n-2}(u) \ar[r]      \ar[d]_{\delta^{\mu}_{n-2}}       &
\dots  \ar[r]           &
K_{1}(u) \ar[r]^{d_{1}^u}   \ar[d]_{\delta^{\mu}_{1}}    &
K_{0}(u)   \ar[d]_{\delta^{\mu}_{0}}    \\
0 \ar[r]                  &
K_n(u,M) \ar[r]^{d_n^{u,M}}           &
K_{n-1}(u,M) \ar[r]       &
\dots \ar[r]              &
K_{2}(u,M) \ar[r]^{d_2^{u,M}}         &
K_{1}(u,M)       
}
\]
By~\ref{cor:homotopies}, the morphism of complexes from the top row to the bottom row is homotopic to a morphism with values in $J_xK_{\bullet}(u,M)$. In particular, there exists a morphism $h_0 : K_0(x)\to K_2(u,M)$ such that $\delta_0^{\mu}=d_2^{u,M}h_0$ 
modulo $J_x$. It follows that $\mu=\delta_0^{\mu}(1)$ belongs to 
$J_uM^n+J_xM^n=J_uM^n$, as required.

Now let us prove (2). Since $\Delta.\Id=W.(\Com W)^T$ we have $u\Delta=x.\Com 
W^T$ hence $\Delta J_u\subset J_x$. From this we deduce the two inclusions
\begin{align*}
 (J_x+(\Delta)) M\subset (J_xM:J_u)\\
J_u M \subset (J_xM: (J_x+(\Delta))) 
\end{align*}
Let us prove the converse of the second one. Let $m\in M$ such that $\Delta 
m\in J_x M$ and let us prove that $m\in J_uM$.
The multiplication by $m$ gives 
a morphism of complexes $K(u)\to K(u,M)$. We also have as above the morphism $\Lambda W : K(x)\to K(u)$. 
\[
 \xymatrix@C=1.5pc{
K_n(x) \ar[r]^{d_n^x}     \ar[d]_{\Lambda^nW=\Delta}      &
K_{n-1}(x) \ar[r] \ar[d]_{\Lambda^{n-1}W}  &
\dots \ar[r]                &
K_{1}(x) \ar[r]^{d_1^x}   \ar[d]_{W}    &                         
K_{0}(x)   \ar@{=}[d]   \\                          
K_n(u) \ar[r]^{d_n^u}     \ar[d]_m      &
K_{n-1}(u)  \ar[r] \ar[d]_{m}  &
\dots  \ar[r]           &
K_{1}(u) \ar[r]^{d_{1}^u}   \ar[d]_{m}    &
K_{0}(u)   \ar[d]_{m}    \\
K_n(u,M) \ar[r]^{d_n^{u,M}}           &
K_{n-1}(u,M) \ar[r]       &
\dots \ar[r]              &
K_{1}(u,M)  \ar[r]^{d_1^{u,M}}         &   
K_{0}(u,M)
}
\]
By~\ref{cor:homotopies} their composition $K(x)\to K(u,M)$ is homotopic to a morphism with values in $J_xK(u,M)$. In particular, there is a 
morphism $h_0 : K_0(x)\to K_1(u,M)$ such that $m=d_1^{u,M}(h_0(1))$ modulo 
$J_xM$, which proves that $m\in J_uM$. Hence we have the equality
$(J_xM: (J_x+(\Delta))) =J_u M$.

Let us prove the inclusion $ (J_xM:J_u)\subset  (J_x+(\Delta)) M$. Let $m\in M$ 
such that $J_u.m\subset J_xM$ and let us prove that $m\in (J_x+(\Delta)) M$. We first prove by induction on $\ell\in\{0,\dots, n\}$ that there is a morphism $f_{\ell} : 
K_{\ell}(u)\to K_{\ell}(x,M)$ that satisfies the following equalities, modulo $J_xK_{\ell}(x,M)$:
\begin{align}
 f_{\ell}d_{\ell+1}^u&=0  \tag{A} \label{eq:liaison1}  \\
 f_{\ell}\Lambda^{\ell}W&=m.\Id \tag{B} \label{eq:liaison2}
\end{align}
For $\ell=0$ we can take $f_0=m\Id$. Then \eqref{eq:liaison1} is satisfied by our assumption on $m$ and \eqref{eq:liaison2} is tautological. Assume that we have constructed $f_{\ell-1}$ for some integer $1\leq \ell\leq n$. By \eqref{eq:liaison1}, there exist morphisms $\mu_1, \dots, \mu_n : K_{\ell}(u) \to K_{\ell-1}(x,M)$ such that $f_{\ell-1}d_{\ell}^u=\sum_{i=1}^nx_i\mu_i$.
\[
  \xymatrix@C=1.5pc{
K_{\ell+1}(x) \ar[r]^{d_{\ell+1}^x}     \ar[d]_{\Lambda^{\ell+1}W}      &
K_{\ell}(x) \ar[r]^{d_{\ell}^x}   \ar[d]_{\Lambda^{\ell}W}    &                         
K_{\ell-1}(x)   \ar[d]^{\Lambda^{\ell-1}W}    \\                          
K_{\ell+1}(u) \ar[r]^{d_{\ell+1}^u}           &
K_{\ell}(u) \ar[r]^{d_{\ell}^u}   \ar@{.>}[d]_{f_{\ell}}
  \ar[rd]_{\mu_i} &
K_{\ell-1}(u)   \ar[d]^{f_{\ell-1}}    \\
    & K_{\ell}(x,M) & K_{\ell-1}(x,M)
}
\]
Let $\Pc_{\ell}$ denote the set of subsets of $\{1,\dots, n\}$ with cardinality $\ell$.
We define $f_{\ell} : K_{\ell}(u)\to K_{\ell}(x,M)$ to be the morphism whose $J$-th component $f_{\ell}^J : K_{\ell}(u)\to M$, for $J=\{j_1,\dots, j_{\ell}\}\in \Pc_{\ell}$ with $j_1<\dots <j_\ell$, is $(-1)^{\ell-1}$ times the $(J\smallsetminus j_{\ell})$-th component of $\mu_{j_\ell}$. Since $(d^u)^2=0$, we have $\sum_{i=1}^nx_i(\mu_i d_{\ell+1}^u)=0$. Since $x$ is $M$-independent, this implies that $\mu_i d_{\ell+1}^u=0$ modulo $J_xK_{\ell-1}(x,M)$, hence $f_{\ell}$ satisfies~\eqref{eq:liaison1}.
Let us prove that it satisfies \eqref{eq:liaison2}. Let $I, J\in \Pc_{\ell}$. We have to prove that $f_{\ell}^J(\Lambda^{\ell}W(e_I))=\delta_{I,J}m$ modulo $J_xM$, where $\delta_{I,J}=1$ if $I=J$ and $\delta_{I,J}=0$ otherwise. Recall that $(-1)^{\ell-1}f_{\ell}^J$ is the $(J\smallsetminus j_\ell)$-th component of $\mu_{j_\ell}$, where $J=\{j_1,\dots, j_\ell\}$ with $j_1<\dots<j_\ell$. We have
\begin{align*}
 \sum_{i=1}^nx_i\mu_i(\Lambda^{\ell}W(e_I))
   &= f_{\ell-1}d_\ell^u (\Lambda^{\ell}W(e_I)) \\
   &= f_{\ell-1}\Lambda^{\ell-1}W(d_\ell^x(e_I)) \\
   &\equiv m.d_{\ell}^x(e_I) \quad \textrm{ modulo } J_x^2K_{\ell-1}(x,M)\\
   &\equiv \sum_{k=1}^{\ell}(-1)^{k-1}x_{i_k}e_{I\smallsetminus i_k}m \quad \textrm{ modulo } J_x^2K_{\ell-1}(x,M)
\end{align*}
where the first congruence follows from the assumption \eqref{eq:liaison2} for $f_{\ell-1}$ and the second one is by definition of $d_{\ell}^x$. By the $M$-independence of the sequence $x$, it follows that $f_{\ell}^J(\Lambda^\ell W(e_I))=0$ modulo $J_xM$, unless there exists an integer $k$ such that $i_k=j_\ell$ and $I\smallsetminus i_k=J\smallsetminus j_\ell$. These conditions can happen only if $k=\ell$ and $I=J$, and in this case we find that $(-1)^{\ell-1}f_{\ell}^J(\Lambda^\ell W(e_I))=(-1)^{\ell-1}m$ modulo $J_xM$, as desired. This concludes the induction. In particular, for $\ell=n$, this yields $f_n : A\to M$ such that $f_n(\Delta)=f_n(\Lambda^nW(1))\equiv m$ modulo $J_xM$, hence $m\in J_xM+\Delta M$.
\details{Pour voir cette preuve à coups de déterminants, voir les calculs en C6 p.53v à 54v pour le cas $n=3$, puis C11 p.59 à 69v pour le cas général (précédé des cas $n=2$ et $n=3$ réécrits).
}

Since $J_u.(J_x+(\Delta))\subset J_x$ we have the inclusions
\begin{align*}
 \Ann_A(J_uM/J_xM)\supset \Ann_A(M/(J_x+(\Delta))M) \\
\Ann_A((J_x+(\Delta))M/ J_xM)\supset \Ann_A(M/J_u M)
\end{align*}
Let $\lambda \in  \Ann_A(J_uM/J_xM)$. Then $J_u\lambda M 
\subset J_x M$, hence $\lambda M\subset (J_xM:J_u)=(J_x+(\Delta))M$, which 
proves that $\lambda\in \Ann_A(M/(J_x+(\Delta))M)$. Similarly the last equality 
of (2) follows from $(J_xM: (J_x+(\Delta))) =J_u M$.

Let us prove (3). Let $\sum x_ib_i=0$ be a relation in $A$. By the 
$M$-independence of $x$, for any $m\in M$ the relation $\sum x_i(b_im)=0$ 
implies that $b_im\in J_xM$. Hence $b_i\in \Ann_A(M/J_xM)=J_x$, which proves 
that $x$ is $A$-independent. Then $u$ is $A$-independent as well by (1).
The equalities $(J_x:J_u)=J_x+(\Delta)$ and $(J_x: J_x+(\Delta)) =J_u$ follow from (2) applied with $M=A$. The equalities $\Ann_A(M/(J_x+(\Delta))M)=(J_x:J_u)$ and $\Ann_A(M/J_uM)=(J_x: J_x+(\Delta))$ follow from the last equalities of~(2) and the faithfulness of $M/J_xM$. 

It remains to prove that $ \Fit_{\ov{A}}(J_u/J_x)=(\ov{\Delta})$. By definition, 
$\Fit_{\ov{A}}(J_u/J_x)$ is the ideal generated by the determinants~$\ov{\det W'}$, 
where $W'\in M_n(A)$ is a matrix such that the composition $\ov{uW'} : \ov{A}^n 
\to \ov{A}^n \to J_u/J_x$ is zero. In particular $\ov{\Delta}=\ov{\det W}\in 
\Fit_{\ov{A}}(J_u/J_x)$. Conversely, let $W'$ be such a matrix. Since $uW'$ has 
entries in $J_x$, so has $uW'(\Com W')^T=u\det W'$, hence $\det W'\in (J_x: 
J_u)=J_x+(\Delta)$. It follows that $\ov{\det W'}\in \ov{\Delta}$, as required.

Finally let us prove (4). If $\Delta \in J_x$, then $(\Delta)+J_x=J_x$ so that 
using (2) we get
\[
 M=(J_xM:J_x)
=(J_xM:J_x+(\Delta))
= J_uM\, .
\]
If $J_u$ is nilpotent this implies $M=0$. If $J_u$ is included in the 
radical of $A$ and $M$ is of finite type, then by Nakayama's Lemma we also have 
$M=0$. In case (c), by (3) we have $J_u=\Ann_A(M/J_uM)=A$. This yields a contradiction in all three cases, hence $\Delta\notin J_x$. In particular $\Delta\neq 0$. This implies $\mu_A(J_u)=n$, for otherwise we could choose $u$ and $W$ such that $u_n=0$ and $W$ has a zero line.
\end{proof}

\details{Voici une preuve de (1) dans le cas (trivial) où la matrice $W$ est diagonale.
Let $x_1,\dots, x_n\in \mgo_A$ and $\lambda_1,\dots, \lambda_n\in A$. If the 
sequence $(\lambda_1x_1,\dots, \lambda_nx_n)$ is $M$-independent, then the 
sequence $(x_1,\dots, x_n)$ is $M$-independent. Indeed, we may 
assume that $\lambda_1=\dots=\lambda_{n-1}=1$. 
 Let $\sum x_im_i=0$ be a relation in $M$. Then $\sum x_i\lambda_nm_i=0$, hence 
there exist elements $m_i'\in M$ such that $m_n=\sum_{i=1}^{n-1}x_im_i 
+\lambda_nx_nm_n$. Then the relation $\sum x_im_i=0$ becomes
\[
 \sum_{i=1}^{n-1}x_i(m_i+x_nm_i') +\lambda_nx_n(x_nm_n')=0
\]
which proves that $m_i+x_nm_i'\in (x_1,\dots, x_{n-1}, \lambda_nx_n)M$ for any 
$i\leq n-1$, whence $m_i\in (x_1, \dots, x_n)M$, as desired.
}

\begin{remarque}\rm
 Note that by~\ref{thm:freeness_criterion_from_Mindep} below, the assumption $\Ann_A(M/J_xM)=J_x$ in (3) and (4) holds in 
particular if $A$ is local with maximal ideal $\mgo_A=J_u$ and $M$ is nonzero and of finite 
type over~$A$. Note also that the equalities in (3) do not  hold in general, even if $M$ is faithful.
\end{remarque}

\begin{remarque}\rm
 Under the assumptions of~\ref{thm:liaison_suites_Mindep}, if $x$ is regular, then so is the sequence~$u$ (\cite[1.2]{Simon_Strooker_Wiebe}). However, if $x$ is strongly $M$-independent, then $u$ does not need to have the same property, even for $M=A$. For example, let $A=k[x]/(x^8)$. Then $x^4$ is strongly independent but not $x^3$.
\end{remarque}

\details{
\begin{question}\rm
 Let $A$ be a Noetherian local ring and let $u$ be a minimal system of 
generators of $\mgo_A$ (hence an $A$-independent sequence). If $x=uW$ is 
$A$-independent, we have seen that $\Delta\notin J_x$. We may ask for a 
converse 
statement, i.e. can we find natural conditions on $\Delta$ that imply that $x$ 
is $A$-independent. It is not true that $\Delta\notin J_x\Rightarrow x$ is 
$A$-independent. It seems difficult in general to formulate a 
conjecture... However, if we assume that $A$ is Gorenstein, then it might be 
possible that:
\begin{enumerate}
 \item There is in $A$ a minimal $A$-independent ideal $J_m$. Let 
$\Delta_m=\det W$ where $W$ is a matrix such that $uW$ generates $J_m$. (In 
particular $\Delta_m\notin J_m$.)
\item For an arbitrary matrix $W$ with determinant $\Delta$, TFAE:
\begin{enumerate}
 \item $uW$ is $A$-independent
\item $\Delta_m\in (\Delta)$
\item $(J_m:J_u)\subset (\Delta)+J_m$
\item $(J_m:J_u)\subset (J_x:J_u)$
\end{enumerate}
On aurait (a),(b), (c) équivalentes en général, impliquent (d), mais (d) 
implique (a) seulement si $W$ est à coeffients dans $\mgo_A$. 
\end{enumerate}
Voir des calculs à ce sujet dans C6, p.57 à 59.
\end{question}
}

\begin{thm}
 \label{thm:freeness_criterion_from_Mindep}
Let $A$ be a Noetherian local ring and let $M$ be a nonzero $A$-module of 
finite type. Let $I\subset \mgo_A$ be an $M$-independent ideal with $\mu_A(I)\geq \edim(A)$. Then:
\begin{enumerate}
\item $\mu_A(I)=\edim(A)$
\item The ring $A/I$ is a complete intersection of dimension zero.
\item The module $M/I^2M$ is free over $A/I^2$.
\item If $I$ is strongly $M$-independent, then $M$ is free over $A$.
\end{enumerate}
\end{thm}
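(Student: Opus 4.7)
The plan is to establish the four statements in order, using Theorem~\ref{thm:liaison_suites_Mindep} as the main engine.

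For Part~(1), let $n=\mu_A(I)\geq d:=\edim(A)$ and let $(x_1,\dots,x_n)$ be a minimal generating sequence of $I$, hence $M$-independent. Choose any length-$n$ generating sequence $(u_1,\dots,u_n)$ of $\mgo_A$; this is possible since $n\geq d$, by completing a minimal system with redundant entries. Since $J_x\subset J_u$, Theorem~\ref{thm:liaison_suites_Mindep}(1) forces $u$ to be $M$-independent, and the remark following Definition~\ref{def:strong_M_indep} (applicable since $M$ is nonzero of finite type) then forces it to be a minimal system of generators of $\mgo_A$. Hence $n=d$.

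For Parts~(2)--(4), fix a minimal system $(u_1,\dots,u_d)$ of generators of $\mgo_A$ and a matrix $W\in M_d(A)$ with $x=uW$, and set $\Delta=\det W$. Theorem~\ref{thm:liaison_suites_Mindep}(4)(b) gives $\Delta\notin I$. If $\Delta\notin\mgo_A$, then $W$ is invertible and $I=\mgo_A$; here (2) is trivial and (3)--(4) follow from Proposition~\ref{prop:indep_implies_freeness} applied to $\mgo_A$, using $M/\mgo_AM\cong\kappa(A)^r$. So assume $\Delta\in\mgo_A$. The crux is to establish that $\bar M:=M/IM$ is free of rank $r:=\mu_A(M)$ over $A/I$. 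Theorem~\ref{thm:liaison_suites_Mindep}(2) shows that multiplication by $\bar\Delta$ on $\bar M$ has kernel $\mgo_A\bar M$ and induces an isomorphism $\bar M/\mgo_A\bar M\xrightarrow{\sim}\bar\Delta\bar M$ onto the $\mgo_A$-socle of $\bar M$, which is therefore isomorphic to $\kappa(A)^r$. Combined with the freeness of $M/\mgo_A^2M$ over $A/\mgo_A^2$ (Proposition~\ref{prop:indep_implies_freeness}(1) applied to the $M$-independent ideal $\mgo_A$, whose $M$-independence is a byproduct of Part~(1)), a minimal-presentation plus Nakayama argument should force $\bar M\cong(A/I)^r$.

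Once $\bar M$ is free, $\Ann_A(\bar M)=I$, so Theorem~\ref{thm:liaison_suites_Mindep}(3) applies: $x$ is $A$-independent and $\Fit_{A/I}(\mgo_A/I)=(\bar\Delta)$. A completion argument then gives (2): writing $\hat A=R/J$ with $R$ complete regular local of dimension $d$ and $\tilde x\in R^d$ lifting $x$, the adjugate identity $\tilde\Delta\mgo_R\subset(\tilde x)$ combined with $\tilde\Delta\notin J+(\tilde x)$ forces $(\tilde x)$ to be $\mgo_R$-primary in the Cohen--Macaulay ring $R$, hence a regular sequence of length $d=\dim R$, from which the complete intersection conclusion for $\widehat{A/I}$ follows by comparison with a minimal Cohen presentation. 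Parts~(3) and~(4) then follow from Proposition~\ref{prop:indep_implies_freeness}(1) and~(2) respectively, using the freeness of $\bar M$ over $A/I$.

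The main obstacle is the freeness of $M/IM$ over $A/I$ from the $M$-independence of $I$ alone, since Proposition~\ref{prop:indep_implies_freeness} takes this as a hypothesis rather than producing it. Extracting it from the socle identification of Theorem~\ref{thm:liaison_suites_Mindep}(2) together with the mod-$\mgo_A^2$ freeness is the technical heart of the proof; the upgrade from Artin Gorenstein to complete intersection in (2) via the completion argument is a secondary hurdle.
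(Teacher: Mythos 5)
Your plan for part (1) is a valid alternative to the paper's: you observe that any length-$n$ generating sequence $u$ of $\mgo_A$ with $J_x\subset J_u$ must be $M$-independent by Theorem~\ref{thm:liaison_suites_Mindep}(1), hence minimal by the Nakayama remark in Section~\ref{section:indep_seq}, so $n=\edim(A)$. The paper instead invokes Theorem~\ref{thm:liaison_suites_Mindep}(4) directly, which gives $\mu_A(J_u)=n$ in one step; both routes are fine. You also correctly extract the weak torsion-freeness condition $\Delta m\in IM\Rightarrow m\in\mgo_AM$ from Theorem~\ref{thm:liaison_suites_Mindep}(2), and you correctly see that once freeness of $\bar M=M/IM$ over $A/I$ is in hand, parts (3) and (4) follow from Proposition~\ref{prop:indep_implies_freeness}.

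The genuine gap is in the logical ordering of (2) and the freeness of $\bar M$, and this is where your sketch breaks. You try to prove $\bar M\cong(A/I)^r$ first, via the socle identification plus mod-$\mgo_A^2$ freeness plus an unspecified ``minimal-presentation plus Nakayama argument,'' and only then deduce (2) via a completion argument from $\Ann_A(\bar M)=I$. But socle dimension equal to $\mu(\bar M)$ plus mod-$\mgo_A^2$ freeness does not on its own yield freeness over $A/I$ without first knowing that $A/I$ is Gorenstein; the Gorenstein hypothesis is exactly what makes the weak torsion-freeness condition equivalent to freeness. The paper avoids the circularity by going the other way: from $\Delta\notin I$ and $n=\edim(A)$ it concludes immediately that $W$ is a \emph{Wiebe matrix} for $I$, and cites the criterion of Simon--Strooker--Wiebe to get that $A/I$ is a zero-dimensional complete intersection --- no module $M$ is needed for this, and no freeness of $\bar M$. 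Only \emph{then} does the Gorenstein property of $A/I$ enter, and the weak torsion-freeness of $\bar M$ (your socle condition) is upgraded to freeness by a separate cited result (Brochard--Mézard 3.7). So (2) must be established independently and before the freeness of $\bar M$, not after it. Your ``technical heart'' is thus located correctly, but the argument as sketched would run in a circle, and the Wiebe-matrix criterion is the missing ingredient that unlocks it.

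Two smaller points: your appeal to ``the remark following Definition~\ref{def:strong_M_indep}'' is a slight misattribution --- the relevant Nakayama remark precedes that definition --- and the case split on whether $\Delta\in\mgo_A$ is unnecessary, since the argument via the Wiebe criterion treats $I=\mgo_A$ uniformly.
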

\begin{proof} Let $x_1,\dots, x_n$ be a minimal system of generators of~$I$, let $u_1, \dots, u_n\in A$ be generators of $\mgo_A$ and let $W$ be a 
square matrix of size $n$ with entries in $A$ such that $uW=x$, where $u$ and 
$x$ respectively denote the row vectors $(u_1, \dots, u_{n})$ and 
$(x_1, \dots,x_n)$. Let $\Delta=\det W$. By~\ref{thm:liaison_suites_Mindep}, (4), we have $\Delta\notin I$ and $n=\edim(A)$. Then $W$ is a Wiebe matrix for the ideal $I$, hence $A/I$ is a complete intersection of dimension zero 
by~\cite[2.7]{Simon_Strooker_Wiebe}.
The fact that $\Delta m\in IM \Rightarrow m\in \mgo_A M$ means that $M/IM$ is weakly torsion-free over $A/I$ in the sense of~\cite{Brochard_Mezard_ConjDeSmit}. 
By~\cite[3.7]{Brochard_Mezard_ConjDeSmit} it is actually free over $A/I$, since the latter ring is Gorenstein.
Now assertions (3) and (4) follow from~\ref{prop:indep_implies_freeness}.
\end{proof}

\subsection{Open questions.} Let $A$ be a Noetherian local ring, $(x_1,\dots, x_n)$ a minimal system of generators of its maximal ideal, and $M$ an $A$-module of finite type. In view of~\ref{prop:Mindep_en_termes_Koszul} and its corollaries, it is tempting to wonder whether the Koszul complex $K(x,M)$ could compute the maximal length of an $M$-independent sequence, as it does for $M$-regular sequences. I do not know whether or not this is the case. 

Similarly, if an $A$-module $M$ has finite projective dimension, I do not know if there always exists an $M$-independent sequence of length at least $\edim(A)-\pd_A(M)$. Such an existence result would be useful to get freeness statements in situations where the ring $A$ is not regular, e.g. this could yield a proof of the freeness part of~\cite[1.2]{Brochard_Iyengar_Khare_A_Freeness_Criterion} analogous to that of~\ref{cor:desmit}.

\begin{exemple}\rm
Let $A=k[|x,y|]/(x^2-y^3)$ and let $M=A/(x)$. Since $x$ is a nonzero divisor on $A$, we have $\pd_A(M)=1$. Since $\edim(A/(x))=1$, by~\ref{thm:freeness_criterion_from_Mindep} all $M$-independent sequences must have length $\leq 1$. Assume that there exists a non trivial strongly $M$-independent sequence $(u)$. Since $A$ is reduced, for any $i\geq 0$ we have $u^i\neq 0$. Let $m\in M$ be such that $um=0$. By $M$-independence, $m\in uM$, hence there exists $m'\in M$ such that $m=um'$. Then $u^2m'=0$. Using the strong $M$-independence and arguing by induction, we see that $m\in \cap_{i\geq 0}u^iM=0$. Hence $u$ is a nonzero divisor on $M$, which is a contradiction since $\prof_A(M)=0$. This gives an example of a module of finite projective dimension, such that the maximal length of a strongly $M$-independent sequence is lower than $\edim(A)-\pd_A(M)$.
\end{exemple}

\section{A freeness criterion without patching for balanced modules}
\label{section:balanced_desmit}

One of the ingredients in the work of Wiles and Taylor-Wiles \cite{TaylorWiles1995Ring,Wiles} on modularity lifting theorems is a  patching  method, that is used to reduce the proof of a commutative algebra result to a  statement about modules over regular local rings, statement that follows from the Auslander-Buchsbaum formula. See e.g. \cite[2.1]{Diamond_The_Taylor_Wiles}. Bart de Smit conjectured that if $A\to B$ is a local homomorphism of Artin local rings of the same embedding dimension, then any $A$-flat $B$-module is also flat as a $B$-module. The conjecture was proved recently, and as explained in~\cite[\S 3]{Brochard_DeSmit2}, this allows one to dispense with patching in Wiles' proof of Fermat's Last Theorem.

Since~\cite{TaylorWiles1995Ring,Wiles}, the patching method has evolved and has been generalized in several directions. For example, Calegari and Geraghty propose in~\cite[2.3]{Calegari_Geraghty} a variant of Wiles' patching method for what they call \emph{balanced modules}. Here is the basic freeness result on which their patching method relies. 

\begin{prop}
 \label{prop:CG_freeness_regular_case} Let $A\to B$ be a local morphism of regular local rings such that $\dim(A)=\dim(B)+1$. Let $M$ be a $B$-module, of finite type over $A$, such that $$\dim_{k}\Tor_A^1(M,k)\leq \dim_k\Tor_A^0(M,k).$$ Then $M$ is free over $B$.
\end{prop}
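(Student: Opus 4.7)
The plan is to apply the Auslander--Buchsbaum formula to both $A$ and $B$. Since $M$ is finitely generated over $A$, it is also finitely generated over $B$ via $\varphi$, so both $\pd_A(M)$ and $\pd_B(M)$ are finite. Auslander--Buchsbaum gives $\depth_A(M)+\pd_A(M)=n$ and $\depth_B(M)+\pd_B(M)=n-1$, while $\varphi(\mgo_A)\subseteq\mgo_B$ implies $\depth_A(M)\leq\depth_B(M)$ (any $M$-regular sequence in $\mgo_A$ is also $M$-regular in $\mgo_B$). Combining these bounds, $\pd_B(M)\leq\pd_A(M)-1$, so it will suffice to show $\pd_A(M)\leq 1$; this forces $\pd_B(M)=0$, i.e.\ $M$ is $B$-free.

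To bound $\pd_A(M)$, I would start with a minimal $A$-free presentation $A^{r_1}\xrightarrow{\Phi}A^{r_0}\to M\to 0$, where $r_i=\dim_k\Tor_A^i(M,k)$ by minimality. The plan is to choose an element $x\in\ker\varphi\setminus\mgo_A^2$, so that $x$ is a regular parameter of $A$ with $\varphi(x)=0$; such an $x$ exists whenever $\ker\varphi\not\subset\mgo_A^2$, in particular when $\varphi$ is surjective (the setting originally treated by Calegari--Geraghty). Since $xM=0$, each $xe_i$ lies in $\Im(\Phi)$, yielding a lift $\Psi\colon A^{r_0}\to A^{r_1}$ with $\Phi\Psi=xI_{r_0}$. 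Extending $x=x_1$ to a regular system of parameters $x_1,\ldots,x_n$ of $A$ and reducing the identity $\Phi\Psi=xI_{r_0}$ modulo $\mgo_A^2$, one finds that the reduction $\bar\Psi_0$ of $\Psi$ modulo $\mgo_A$ is right-invertible by the $x_1$-coefficient of $\Phi\bmod\mgo_A^2$; hence $r_0\leq r_1$, and combined with the hypothesis $r_1\leq r_0$ this yields $r_0=r_1$, with both matrices in $\GL_{r_0}(k)$.

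Taking determinants in $\Phi\Psi=xI_{r_0}$ then gives $\det\Phi\cdot\det\Psi=x^{r_0}$. Minimality forces $\det\Phi\in\mgo_A^{r_0}$, and since $A$ is a UFD with $x$ irreducible of $\mgo_A$-adic valuation $1$, one deduces $\det\Phi=ux^{r_0}$ for a unit $u$, so $\det\Psi$ is a unit and $\Psi\in\GL_{r_0}(A)$. Then $\Phi=x\Psi^{-1}$ and $M\cong A^{r_0}/xA^{r_0}=(A/(x))^{r_0}$, whence $\pd_A(M)=\pd_A(A/(x))=1$, completing the argument. The main obstacle is the existence of a suitable $x$: in the non-surjective case $\ker\varphi$ may well lie inside $\mgo_A^2$ (e.g.\ for $A=k[[u,v]]\to B=k[[t]]$ with $u\mapsto t^2$, $v\mapsto t^3$, where $\ker\varphi=(v^2-u^3)$), and one would then appeal to the stronger Theorem~\ref{thm_VP} of Section~4, which covers any finite local morphism with complete-intersection closed fibre, or perform an auxiliary change-of-rings reduction to the surjective case.
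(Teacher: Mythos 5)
Your opening reduction is exactly the one the paper makes: both Auslander--Buchsbaum formulas plus $\depth_A(M)\le\depth_B(M)$ reduce everything to proving $\pd_A(M)\le 1$, and that part is correct.

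Where you diverge is in the proof that $\pd_A(M)\le 1$, and here your argument has a genuine gap that you have correctly identified but not closed. The paper's route is shorter and unconditional: it takes a (possibly padded) presentation $A^d\to A^d\to M\to 0$ with $d=\mu_A(M)$, notes that $M$ is a finite $B$-module so $\dim_A M\le\dim B<\dim A$, hence $M_\eta=0$ at the generic point; localizing the presentation at $\eta$ then forces the square matrix to be an isomorphism over $\Frac(A)$, so its kernel $K\subset A^d$ satisfies $K_\eta=0$ and, being torsion-free over the domain $A$, must vanish. That gives $\pd_A(M)\le 1$ with no hypothesis on $\ker\varphi$ whatsoever. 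Your determinant argument instead needs a regular parameter $x$ of $A$ lying in $\ker\varphi\setminus\mgo_A^2$, and as your own example $k[[u,v]]\to k[[t]]$, $u\mapsto t^2$, $v\mapsto t^3$ shows, $\ker\varphi$ can sit entirely inside $\mgo_A^2$ when $\varphi$ is not surjective. Your suggested fallbacks do not repair this: Theorem~\ref{thm_VP} requires $\varphi$ to be a \emph{finite} morphism with $B/\mgo_A B$ a complete intersection, neither of which is among the hypotheses of the present proposition (only $M$, not $B$, is assumed finite over $A$, and one cannot invoke finiteness of $B$ without first establishing $\Ann_B M=0$, which is itself part of what must be proved); and the ``auxiliary change-of-rings reduction to the surjective case'' is asserted but not carried out and is not obviously available, since the image $\varphi(A)\subset B$ need not be regular. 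So, as written, your proof only covers the surjective (Calegari--Geraghty) special case, while the paper's generic-fibre argument handles the general statement and is in fact simpler than the special-case argument you give.
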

\begin{proof}
 Here $k$ denotes the residue field of $A$. By assumption, the module $M$ admits a presentation of the form $A^d\to A^d\to M\to 0$, with $d=\mu_A(M)$. Let $K$ be the kernel of the first map and let $\eta$ denote the generic point of $\Spec A$. Since $M$ is of finite type over $A$, we have $\dim_A(M)=\dim_B(M)\leq \dim(B)$, hence $\dim_A(M)<\dim(A)$ and $\eta$ is not in the support of $M$. This means that $M_{\eta}=0$. Applying the functor $(-)\otimes_A\Frac(A)$ to the presentation of $M$ and looking at dimensions over $\Frac(A)$ then yields $K_{\eta}=0$. Since $K\subset A^d$ this implies $K=0$. Hence $\pd_A(M)\leq 1$. The Auslander-Buchsbaum formula then yields $\prof_A(M)\geq \dim(A)-1=\dim(B)$. Hence $\prof_B(M)\geq \dim(B)$ and using the Auslander-Buchsbaum formula again, this implies that $M$ is free over $B$.
\end{proof}

Here dim $\Tor_A^0(M,k)$ is the rank of $M$, i.e. the minimal number of generators of $M$ as an $A$-module, while $\dim_k\Tor_A^1(M,k)$ is heuristically the number of relations, so the result says that $M$ is $B$-free if there are not too many relations compared to $\mu_A(M)$. It seems natural to wonder whether it could be possible to dispense with patching here also. For this we would need a statement analogous to~\ref{prop:CG_freeness_regular_case} above for arbitrary Noetherian local rings and not only for regular ones. This raises the following question: if $A\to B$ is a local homomorphism of Noetherian local rings with $\edim(A)-\edim(B)\geq 0$, and if $M$ is a $B$-module of finite type over~$A$, is it possible to give a freeness criterion for $M$ over $B$ in terms of the numbers $\dim\Tor_A^1(M,k)$ and $\mu_A(M)$? Our main result in this direction is~\ref{thm_VP}. Theorem~\ref{thm:liaison_suites_Mindep} is a key ingredient in its proof. First of all we introduce the following number that will be convenient in what follows. 

\begin{defi}
Let $A$ be a Noetherian local ring with residue field $k$ and let $M$ be a finite type $A$-module. The \emph{torsion ratio} $t_A(M)$ of $M$ is 0 if $M=0$, otherwise it is
\[
 t_A(M)=\frac{\dim_k\Tor_A^1(M,k)}{\dim_k\Tor_A^0(M,k)}.
\]
\end{defi}

\begin{remarque}\rm
 An $A$-module $M$ is balanced in the sense of~\cite[2.1 and 2.2]{Calegari_Geraghty} if and only if $t_A(M)\leq 1$. It is free over~$A$ if and only if $t_A(M)=0$.
\end{remarque}

The next three propositions give basic properties of the torsion ratio that will be useful in the sequel.

\begin{prop}
 \label{prop:calcul_defaut}
Let $A$ be a Noetherian local ring and let $M$ be an $A$-module of finite type. 
Let $\pi : A^r \to M$ be a surjection of $A$-modules with $r=\mu_A(M)$ and let $K=\Ker \pi$ be its kernel. Then there is a canonical isomorphism of $\kappa(A)$-vector spaces
\(
 \Tor_1^A(M,\kappa(A)) \simeq K/\mgo_AK\, ,
\)
and the torsion ratio of $M$ is
 \[
 t_A(M)= \frac{\mu_A(K)}{r}\, .
\]
\end{prop}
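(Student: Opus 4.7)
The plan is to compute both sides by looking at the long exact sequence of Tor associated to the short exact sequence
\[
0 \to K \to A^r \xrightarrow{\pi} M \to 0.
\]
Tensoring with $\kappa(A) = k$ over $A$ yields
\[
0 \to \Tor_1^A(M,k) \to K\otimes_A k \to A^r\otimes_A k \to M\otimes_A k \to 0,
\]
where the first term vanishes because $A^r$ is free so $\Tor_1^A(A^r,k)=0$.

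Next I would observe that the map $A^r\otimes_A k \to M\otimes_A k$ is a surjection of $k$-vector spaces, and both have dimension $r=\mu_A(M)$ (the first by construction, the second by Nakayama applied to $M/\mgo_AM$). Hence this map is an isomorphism, forcing the connecting map to induce an isomorphism
\[
\Tor_1^A(M,k) \xrightarrow{\sim} K\otimes_A k = K/\mgo_AK.
\]
This gives the first claim.

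For the numerical identity, note that since $A$ is Noetherian and $A^r$ is finitely generated, the submodule $K$ is finitely generated, so Nakayama's lemma yields $\mu_A(K)=\dim_k K/\mgo_AK$. Thus $\dim_k \Tor_1^A(M,k)=\mu_A(K)$, while $\dim_k\Tor_0^A(M,k)=\dim_k M/\mgo_AM=r$. Dividing gives $t_A(M)=\mu_A(K)/r$, as required. There is no real obstacle here: the entire argument is a standard application of the Tor long exact sequence, and the only point requiring any care is checking that $K$ is finite type so that $\mu_A(K)$ makes sense.
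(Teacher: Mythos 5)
Your proof is correct and follows essentially the same route as the paper: both deduce the result from the four-term exact sequence $0\to \Tor_1^A(M,\kappa(A))\to K/\mgo_AK\to \kappa(A)^r\to M/\mgo_AM\to 0$ obtained by tensoring the minimal presentation with $\kappa(A)$, and both use that the rightmost map is an isomorphism because $r=\mu_A(M)$. The only difference is stylistic (you spell out the Nakayama step for $\mu_A(K)=\dim_\kappa K/\mgo_AK$, which the paper leaves implicit).
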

\begin{proof}
 The exact sequence $0\to K\to A^r\to M\to 0$ induces an exact sequence:
\[
 0\to \Tor_1^A(M,\kappa(A)) \to K/\mgo_AK \to
\kappa(A)^r \to M/\mgo_AM \to 0.
\]
Since $r=\mu_A(M)$, the morphism $\ov{\pi} : \kappa(A)^r \to M/\mgo_AM$ is an isomorphism and both assertions follow.
\end{proof}

\begin{prop}
 Let $A$ be a Noetherian local ring and $M$ a finite type $A$-module. 
Let $I\subset \mgo_A$ be a proper ideal of $A$ and let $\ov{A}=A/I$ and 
$\ov{M}=M/IM$. Then:
\[
 t_{\ov{A}}(\ov{M})\leq t_A(M).
\]
\end{prop}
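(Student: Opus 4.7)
The plan is to reduce the inequality to a comparison of minimal numbers of generators of syzygies via the previous proposition.

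First I would dispose of the trivial case $M=0$, and otherwise note that $\bar M\neq 0$ by Nakayama's lemma, since $I\subset \mgo_A$ and $M$ is of finite type. Next, observe that
\[
 \bar M/\mgo_{\bar A}\bar M \;=\; M/\mgo_A M,
\]
so $\mu_{\bar A}(\bar M)=\mu_A(M)=:r$. This shows that the denominators $\dim_k\Tor_A^0(M,k)$ and $\dim_k\Tor_{\bar A}^0(\bar M,k)$ coincide; the content of the inequality is therefore entirely in the numerators, and it amounts to proving $\mu_{\bar A}(\bar K)\leq \mu_A(K)$ for suitably chosen first syzygies $K$ and $\bar K$.

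To set this up, I would pick a surjection $\pi:A^r\twoheadrightarrow M$ with $r=\mu_A(M)$ and let $K=\Ker\pi$. Reducing modulo $I$ gives a surjection $\bar\pi:\bar A^r\twoheadrightarrow \bar M$, which realizes a minimal system of generators of $\bar M$ since $r=\mu_{\bar A}(\bar M)$; let $\bar K=\Ker \bar\pi$. By~\ref{prop:calcul_defaut}, the two torsion ratios are
\[
 t_A(M)=\mu_A(K)/r, \qquad t_{\bar A}(\bar M)=\mu_{\bar A}(\bar K)/r,
\]
so it suffices to show $\mu_{\bar A}(\bar K)\leq \mu_A(K)$.

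This last inequality is the only real point. Tensoring the exact sequence $0\to K\to A^r\to M\to 0$ with $\bar A$ over $A$ yields the exact sequence
\[
 \Tor_1^A(M,\bar A)\lto K/IK \lto \bar A^r\lto \bar M\lto 0,
\]
from which $\bar K$ is identified with a quotient of $K/IK$. Therefore
\[
 \mu_{\bar A}(\bar K)\leq \mu_{\bar A}(K/IK)=\dim_k\bigl((K/IK)\otimes_{\bar A}k\bigr)=\dim_k(K/\mgo_A K)=\mu_A(K),
\]
which gives the desired estimate. The only slightly delicate step is keeping track of the fact that $\bar K$ is only a \emph{quotient} of $K/IK$ (not equal to it, because of the $\Tor_1^A(M,\bar A)$ term), but this goes in the favourable direction for our inequality.
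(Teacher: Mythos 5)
Your proof is correct and takes essentially the same route as the paper: reduce a surjection $\pi:A^r\to M$ modulo $I$, note the presentation stays minimal, and compare the minimal number of generators of the two kernels via~\ref{prop:calcul_defaut}. The only cosmetic difference is that you produce the surjection $K/IK\twoheadrightarrow \bar K$ from the Tor long exact sequence, whereas the paper simply asserts it (it also follows directly by lifting elements); both are fine.
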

\begin{proof}
 Let $\pi : A^r \to M$ be a surjection with $r=\mu_A(M)$ and $K=\Ker \pi$. 
Since $I\subset \mgo_A$ the morphism $\ov{\pi}:=\pi\otimes_A \ov{A}$ still 
induces an isomorphism after reduction modulo $\mgo_A$, hence 
$\mu_{\ov{A}}(\ov{M})=\mu_A(M)$. If $K'=\Ker(\ov{\pi})$ we have a surjection of 
$\ov{A}$-modules $\ov{K} \to K'$, so that
\(
 \mu_{\ov{A}}(K')\leq \mu_{\ov{A}}(\ov{K}) = \mu_A(K).
\)
The result now follows from~\ref{prop:calcul_defaut}.
\end{proof}

\begin{prop}
\label{prop:torsion_ratio_of_quotient}
 Let $A$ be a Noetherian local ring and $M$ an $A$-module of finite type. 
Assume that $\mgo_A^2=0$. Then for any submodule $N\subset \mgo_AM$ we have
\[
 t_A(M/N) \geq t_A(M)\, .
\]
\end{prop}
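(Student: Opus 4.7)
The plan is to compute both $\dim_k\Tor_1^A(M,k)$ and $\dim_k\Tor_1^A(M/N,k)$ from a single well-chosen free presentation and to show the first is at most the second. The key point will be that, under the hypothesis $\mgo_A^2=0$, minimal syzygies are annihilated by $\mgo_A$, so the usual comparison of numbers of generators collapses into a plain inclusion of $k$-vector spaces.

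First I would dispose of trivial cases: if $M=0$ both sides equal $0$, while $M=N$ would force $M=\mgo_A M$ and hence $M=0$ by Nakayama, so we may assume $M\neq 0$ and $M/N\neq 0$. Set $r=\mu_A(M)$. Because $N\subset \mgo_A M$, reducing modulo $\mgo_A$ gives an isomorphism $M/\mgo_A M \xrightarrow{\sim} (M/N)/\mgo_A(M/N)$, hence $\mu_A(M/N)=r$ as well. I would then choose a minimal surjection $\pi\colon A^r\twoheadrightarrow M$ and form the composition $\pi'\colon A^r\twoheadrightarrow M/N$, which is still minimal. Writing $K=\ker\pi$ and $K'=\ker\pi'$, we have $K\subset K'$.

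The main step is the observation that, since both $\pi$ and $\pi'$ are minimal, $K$ and $K'$ are both contained in $\mgo_A A^r$. Combined with the assumption $\mgo_A^2=0$, this gives
\[
\mgo_A K \subset \mgo_A K' \subset \mgo_A^2 A^r = 0,
\]
so $K$ and $K'$ are themselves $k$-vector spaces, and $K/\mgo_A K\to K'/\mgo_A K'$ coincides with the inclusion $K\hookrightarrow K'$, which is trivially injective. Applying Proposition~\ref{prop:calcul_defaut} on both sides yields
\[
\dim_k \Tor_1^A(M,k)=\mu_A(K)=\dim_k K \leq \dim_k K'=\mu_A(K')=\dim_k \Tor_1^A(M/N,k),
\]
and since the denominators $\dim_k\Tor_0^A(M,k)=\dim_k\Tor_0^A(M/N,k)=r$ agree, we conclude $t_A(M)\leq t_A(M/N)$.

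There is no real obstacle to this argument; the only thing one has to notice is the collapse $\mgo_A\cdot(\mgo_A A^r)=0$, which converts the inequality of minimal numbers of generators into a statement about pure $k$-dimensions of the syzygy modules themselves, so that the injectivity of $K\hookrightarrow K'$ is automatic rather than requiring any analysis of the connecting map $\Tor_1^A(N,k)\to K\otimes_A k$.
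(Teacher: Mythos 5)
Your proof is correct and follows the same route as the paper: factor a minimal presentation $A^r\to M$ through $M/N$, observe both kernels lie in $\mgo_A A^r$ and are killed by $\mgo_A$ (so they are $\kappa(A)$-vector spaces), and read off the $\Tor_1$ dimensions via Proposition~\ref{prop:calcul_defaut} from the inclusion $K\subset K'$. The only difference is that you spell out the edge cases ($M=0$, $M=N$) explicitly, which the paper leaves implicit.
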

\begin{proof}
 Let $p : M\to M/N$ be the canonical surjection and let $\pi : A^d\to M$ be a minimal presentation of $M$. Since $N\subset \mgo_AM$, the composition $p\circ \pi$ is a minimal presentation of $M/N$. Let $K=\Ker \pi$ and $\ov{K}=\Ker(p\circ \pi)$. We have $K\subset \ov{K}\subset \mgo_AA^d$. Since $\mgo_A^2=0$, $K$ and $\ov{K}$ are $\kappa(A)$-vector spaces, and the inclusion $K\subset \ov{K}$ proves that $\mu_A(\ov{K})\geq \mu_A(K)$. The result now follows from~\ref{prop:calcul_defaut}.
\end{proof}

\begin{lem}
 \label{lemme:defaut_negatif_ameliore}
Let $A$ be a Noetherian local ring and let $M$ be an $A$-module of finite type. 
Let $\delta$ be a nonnegative integer.  
Let
\[
 \xymatrix{
A^{t} \ar[r]^D & A^r \ar[r]^{\pi} &M \ar[r] &0
}
\]
be a minimal presentation of $M$. Let $(x_1, \dots, x_n)$ be a minimal system 
of generators of $\mgo_A$ and let us decompose the matrix $D$ as
\[
 D=x_1D_1+\dots+x_nD_n
\]
with $D_i\in M_{rt}(A)$ (this is possible because $D$ has entries in 
$\mgo_A$, since the presentation is minimal). The following are equivalent:
\begin{enumerate}
 \item $\mgo_AM=(x_{\delta+1}, x_{\delta+2},\dots, x_n)M$
\item The morphism $(\ov{D_1}, \dots, \ov{D_{\delta}}) : \kappa(A)^t \to 
\kappa(A)^{r\delta}$ is surjective, where $\ov{D_i}\in 
M_{rt}(\kappa(A))$ denotes the reduction of $D_i$ 
modulo $\mgo_A$.
\end{enumerate}
In particular, when the conditions are satisfied, then $\delta r \leq t$, hence if $M\neq 0$ then
\[t_A(M)\geq \delta,\]
with equality if and only if $(\ov{D_1}, 
\dots, \ov{D_{\delta}})$ is an isomorphism.
\end{lem}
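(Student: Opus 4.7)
The plan is to encode both conditions (1) and (2) as statements about the single $\kappa(A)$-vector space $V := \mgo_A M/\mgo_A^2 M$, read off directly from the minimal presentation, and then apply Nakayama's lemma to pass between (1) and the corresponding statement in~$V$.

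The key step is to establish a canonical isomorphism
\[
 V \;\simeq\; \kappa(A)^{nr}\big/\Im(\overline{D_1},\dots,\overline{D_n}).
\]
Since the presentation is minimal, $\Im(D)\subset \mgo_A A^r$, so $\mgo_A M\simeq \mgo_A A^r/\Im(D)$ and $V \simeq \mgo_A A^r/(\mgo_A^2 A^r+\Im(D))$. The identification $\mgo_A A^r/\mgo_A^2 A^r \simeq (\mgo_A/\mgo_A^2)\otimes_{\kappa(A)}\kappa(A)^r \simeq \kappa(A)^{nr}$, obtained from the basis $\bar x_i \bar e_j$, sends the $k$-th column of $D$ to the $k$-th column of the stacked matrix $(\overline{D_1},\dots,\overline{D_n})$, essentially by definition of the decomposition $D=\sum x_iD_i$. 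Under this isomorphism, the image of $(x_{\delta+1},\dots,x_n)M$ corresponds to the last $n-\delta$ blocks $\kappa(A)^{(n-\delta)r}\subset \kappa(A)^{nr}$, so projecting onto the first $\delta$ blocks yields
\[
 \mgo_A M\big/\bigl((x_{\delta+1},\dots,x_n)M+\mgo_A^2 M\bigr) \;\simeq\; \kappa(A)^{\delta r}\big/\Im(\overline{D_1},\dots,\overline{D_\delta}).
\]

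Next I would rephrase condition~(1) via Nakayama. The finitely generated $A$-module $N := \mgo_A M/(x_{\delta+1},\dots,x_n)M$ vanishes iff $N = \mgo_A N$, which translates to the equivalence
$\mgo_A M = (x_{\delta+1},\dots,x_n)M \;\Leftrightarrow\; \mgo_A M \subset (x_{\delta+1},\dots,x_n)M+\mgo_A^2 M$.
Combining with the identification displayed above, condition~(1) becomes the vanishing of $\kappa(A)^{\delta r}/\Im(\overline{D_1},\dots,\overline{D_\delta})$, which is exactly the surjectivity claimed in~(2).

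For the final numerical consequences: once (2) holds, $(\overline{D_1},\dots,\overline{D_\delta})$ is a surjection $\kappa(A)^t \twoheadrightarrow \kappa(A)^{\delta r}$, hence $\delta r \leq t$. Since the presentation is minimal, $r = \mu_A(M)$ and $t = \mu_A(\ker\pi)$, and Proposition~\ref{prop:calcul_defaut} gives $t_A(M) = t/r \geq \delta$ when $M\neq 0$, with equality iff $t=\delta r$, iff the surjection is an isomorphism. The main obstacle is the explicit identification of $V$ with $\kappa(A)^{nr}/\Im(\overline{D_1},\dots,\overline{D_n})$ respecting the block decomposition; once this is in place the rest is linear algebra and a single application of Nakayama.
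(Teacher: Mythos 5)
Your argument is correct and proves the lemma, but it organizes the material differently from the paper. The paper proves the two implications of $(1)\Leftrightarrow(2)$ directly with elements: for $(1)\Rightarrow(2)$, given $f_1,\dots,f_\delta\in A^r$ it extends to $f_1,\dots,f_n$ so that $\sum x_i\pi(f_i)=0$, lifts through $D$, and invokes the independence of $x_1,\dots,x_n$ on $A^r$ to conclude $\overline{f_i}=\overline{D_i}\,\overline{\lambda}$ for $i\leq\delta$; for $(2)\Rightarrow(1)$ it uses the hypothesized preimage $\lambda$ to place $\mgo_A M$ inside $\mgo_A^2 M+(x_{\delta+1},\dots,x_n)M$ and then applies Nakayama. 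You instead package both directions as a single statement about one $\kappa(A)$-vector space: after identifying $\mgo_A M/\mgo_A^2 M$ with $\kappa(A)^{nr}/\Im(\overline{D_1},\dots,\overline{D_n})$ and projecting onto the first $\delta$ blocks, condition $(1)$ becomes equivalent, via one application of Nakayama, to the vanishing of $\kappa(A)^{\delta r}/\Im(\overline{D_1},\dots,\overline{D_\delta})$, which is exactly condition $(2)$. The underlying calculations coincide: your identification $\mgo_A A^r/\mgo_A^2 A^r\simeq\kappa(A)^{nr}$ via the basis $\bar x_i\bar e_j$ encodes precisely the $A^r$-independence of $x_1,\dots,x_n$ (equivalently, minimality of the system of generators of $\mgo_A$) that the paper invokes explicitly when it deduces $f_i-D_i\lambda\in\mgo_A A^r$. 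Your formulation is more structural and dispatches both implications in one stroke, at the cost of carefully setting up and verifying the block isomorphism; the paper's element-by-element version is more elementary and treats the two directions symmetrically. The numerical tail of the statement ($\delta r\leq t$, $t_A(M)\geq\delta$ with equality iff $(\overline{D_1},\dots,\overline{D_\delta})$ is an isomorphism) is handled identically in both, via Proposition~\ref{prop:calcul_defaut}.
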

\begin{proof}
 Assume (1) and let $f_1, \dots, f_{\delta}\in A^r$. Since 
$x_1\pi(f_1)+\dots+x_{\delta}\pi(f_{\delta})$ belongs to $\mgo_AM=
(x_{\delta+1}, \dots, x_n)M$, there exist elements $f_{\delta+1}, \dots, f_n\in 
A^r$ such that 
$\sum x_i \pi(f_i)=0=\pi(\sum x_if_i)$. Hence there is an 
element $\lambda \in A^{t}$ such that $\sum x_if_i =D\lambda=\sum 
x_iD_i\lambda$. Then 
\[
 \sum_{i=1}^n x_i(f_i-D_i\lambda)=0
\]
so that $f_i-D_i\lambda\in \mgo_A.A^r$ for any $i$, by independence of~$x_1,\dots, x_n$. Hence 
$\ov{f_i}=\ov{D_i} \ov{\lambda}$ in $\kappa(A)^r$ and this proves that 
$(\ov{D_1}, \dots, \ov{D_{\delta}})$ is surjective.

Conversely assume (2).
Let $f_1, \dots, f_{\delta}\in A^r$. By assumption there exists $\lambda\in 
A^t$ such that modulo $\mgo_A$ we have the equalities
\begin{align*}
 D_1\lambda&=f_1 \\
\vdots \\
D_{\delta}\lambda&=f_{\delta}
\end{align*}
Since $\sum x_i 
\pi(D_i\lambda)=\pi(D\lambda)=0$, we get:
\begin{align*}
\sum_{i=1}^{\delta}x_i\pi(f_i)&=\sum_{i=1}^{\delta} 
x_i\pi(f_i-D_i\lambda)+\sum_{i=1}^{\delta}x_i\pi(D_i\lambda) \\
&= \sum_{i=1}^{\delta}x_i\pi(f_i-D_i\lambda)-\sum_{i=\delta+1}^n 
x_i\pi(D_i\lambda)\\
& \in \mgo_A^2M +(x_{\delta+1}, \dots, x_n)M
\end{align*}
This proves that
\[
 \mgo_AM \subset \mgo_A^2M+(x_{\delta+1}, \dots, x_n)M\, .
\]
By Nakayama this implies $\mgo_AM\subset (x_{\delta+1}, \dots, x_n)M$.
\end{proof}

\begin{cor}
\label{cor:defaut_negatif}
 Let $\varphi : A\to B$ be a local morphism of Noetherian local rings and let~$M$ be a nonzero $B$-module of finite type over $A$. Then $$t_A(M)\geq \edim(A)-\mu_B(\mgo_AB).$$ In particular, if $B/\mgo_AB$ is a complete intersection, then
 \[
  t_A(M)\geq \edim(A)-\edim(B).
 \]
\end{cor}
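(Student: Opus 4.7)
The plan is to deduce the inequality directly from Lemma~\ref{lemme:defaut_negatif_ameliore} applied to a suitably reordered minimal system of generators of $\mgo_A$.

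First, since $M$ is a $B$-module, the action of $\mgo_A$ on $M$ factors through its image in $B$, so $\mgo_AM=(\mgo_A B)M$. Set $n=\edim(A)$, $m=\mu_B(\mgo_A B)$ and $\delta=n-m$, and fix a minimal system of generators $x_1,\dots,x_n$ of $\mgo_A$. The images $\varphi(x_1),\dots,\varphi(x_n)$ generate $\mgo_A B$ as a $B$-ideal, so their classes span the $\kappa(B)$-vector space $\mgo_A B/\mgo_B\mgo_A B$ of dimension $m$. Extract a basis from them: after relabelling I may assume that $\varphi(x_{\delta+1}),\dots,\varphi(x_n)$ project to a basis of $\mgo_A B/\mgo_B\mgo_A B$, so by Nakayama they already generate $\mgo_A B$ as a $B$-ideal. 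Hence
\[
\mgo_AM \;=\; (\mgo_A B)M \;=\; (x_{\delta+1},\dots,x_n)M,
\]
which is precisely hypothesis~(1) of Lemma~\ref{lemme:defaut_negatif_ameliore}. Its conclusion then yields $t_A(M)\geq\delta=\edim(A)-\mu_B(\mgo_A B)$.

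For the ``in particular'' clause it suffices to show that $\mu_B(\mgo_A B)\leq\edim(B)$ whenever $C:=B/\mgo_A B$ is a complete intersection. I would pass to the completion $\hat B$ (which leaves $\edim(B)$, $\mu_B(\mgo_A B)$, and the complete intersection property of $C$ unchanged) and use Cohen's structure theorem to write $\hat B=T/L$ with $T$ a regular local ring, $\edim(T)=\edim(B)$ and $L\subset\mgo_T^2$. Letting $\tilde I\subset T$ be the preimage of $\mgo_A B$, the quotient $\hat C=T/(L+\tilde I)$ is then a complete intersection presented over a regular local ring, so $L+\tilde I$ is generated in $T$ by a regular sequence, of length $\edim(T)-\dim(C)\leq\edim(B)$. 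Since any $T$-generating set of $L+\tilde I$ projects to a $B$-generating set of $(L+\tilde I)/L=\mgo_A B$, this gives $\mu_B(\mgo_A B)\leq\mu_T(L+\tilde I)\leq\edim(B)$, and the first part of the corollary then yields $t_A(M)\geq\edim(A)-\edim(B)$.

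The main obstacle is this last step: the general inequality costs almost nothing once one recognizes the hypothesis of Lemma~\ref{lemme:defaut_negatif_ameliore}, but bounding $\mu_B(\mgo_A B)$ by $\edim(B)$ from the complete intersection hypothesis on $C$ really does require leaving $B$ for a regular cover, since $B$ itself need not be regular and the CI condition is intrinsic to $C$ rather than to any particular presentation.
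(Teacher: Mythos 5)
Your proof is correct. The first inequality follows from Lemma~\ref{lemme:defaut_negatif_ameliore} exactly as the paper intends: since $\mgo_A M = (\mgo_A B)M$, you reorder the minimal generators of $\mgo_A$ so that the last $\mu_B(\mgo_A B)$ of them generate $\mgo_A B$ over $B$ (possible by Nakayama, as their classes span $\mgo_A B/\mgo_B\mgo_A B$), which establishes hypothesis~(1) of the lemma with $\delta = \edim(A)-\mu_B(\mgo_A B)\geq 0$, and $M\neq 0$ then gives $t_A(M)\geq\delta$. Where you diverge from the paper is the \emph{in particular} clause: the paper does not reprove $\mu_B(\mgo_A B)\leq\edim(B)$ but, in the remark preceding Theorem~\ref{thm_VP}, simply cites~\cite[5.4]{Brochard_Mezard_ConjDeSmit} for it. You instead give a self-contained proof via a Cohen presentation $\hat B=T/L$ with $T$ regular, $\edim(T)=\edim(B)$ and $L\subset\mgo_T^2$, using that the kernel $\tilde I$ of a surjection from a regular local ring onto a complete intersection is generated by a $T$-regular sequence of length $\edim(T)-\dim(C)\leq\edim(B)$, and that such a generating set surjects onto a generating set of $\tilde I/L\simeq\mgo_A\hat B$. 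That argument is correct and makes the corollary independent of the external reference, at the cost of passing to the completion and invoking Cohen's structure theorem; the paper's route is shorter only because the needed bound had already been established elsewhere.
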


\begin{exemple}\rm
In general we do not have upper bounds on $t_A(M)$. For example, let $A=k[|x,y|]$ and let $M=A/\mgo^n$. Then $\mu_A(M)=1$, hence
\(
 t_A(M)=\mu_A(\mgo_A^n)=n+1.
\)
\end{exemple}

\begin{exemple}\rm
\label{exemple:defaut_positif}
The second inequality of~\ref{cor:defaut_negatif} can fail if $B/\mgo_AB$ is not a complete intersection, even if $B$ is finite. For example let $k$ be a field and let 
$A=k[x,y,z]/(x,y,z)^2$. Let
\[
 B=A[u,v]/(u^2-x, uv-y, v^2-z)=k[u,v]/(u,v)^4.
\]
Then $(1, u, v)$ is a minimal system of generators for $B$ as an $A$-module, 
and the kernel of the corresponding presentation $\pi : A^3\to B$ is 
(minimally) generated by the two relations $yu=xv$ and $zu=yv$. Hence 
$t_A(B)=\frac 23$, but $\edim(A)-\edim(B)=1$. The second inequality of~\ref{cor:defaut_negatif} can also hold for any finite $B$-module without $B/\mgo_AB$ being a complete intersection. For example with $A\to B$ as 
above, let $B'=B/(v^3)$. Then $B'/\mgo_AB'$ is not a complete intersection, but $t_A(M)\geq 1$ for any finite $B'$-module $M$, with equality if and only if $M$ is $B'$-free. Indeed, $(1, u, v)$ is a minimal system of generators for $B'$ as an $A$-module and the relations are generated by the two above and $zv=0$, hence we have $t_A(B')=\frac 33=1$. From this we have $t_A(M)=1$ for any finite free $B'$-module. It remains to show that if $M$ is not $B'$-free, then $t_A(M)>1$. For this we only sketch the argument because the proof relies on elementary but a bit tedious computations. We can assume that $M=B^{n}/F$ for some sub-$B$-module $F$ of $B^{n}$, with $n=\mu_B(M)$. By~\ref{prop:torsion_ratio_of_quotient} and~\ref{prop:free_on_specialfiber_implies_free} we may assume that $F\nsubseteq \mgo_A {B^n}$. Let $I\subset B$ denote the projection of $F$ onto the first factor of $B^n$. We have $I\subset \mgo_B$ and we may assume that $I\nsubseteq \mgo_AB$. Then, using an explicit description of what the ideal $I$ can be, and arguing by induction on $n$, it is possible to give a minimal presentation of $M$ for which we can construct enough independent relations, and finally prove that $t_A(M)>1$.
\details{Voir C5 p.61 pour ces deux exemples.}
\end{exemple}

\begin{prop}
 \label{prop:critere_suite_Mindep_pour_modules_eq}
Let $A$ be a Noetherian local ring and let $M$ be an $A$-module of finite type. 
Assume that $t_A(M)\leq \delta$ and that $\mgo_AM=(x_{\delta+1}, \dots, 
x_n)M$ for some minimal system of generators $(x_1, \dots, x_n)$ of $\mgo_A$. 
Then the sequence $(x_{\delta+1}, \dots, x_n)$ is $M$-independent.
\end{prop}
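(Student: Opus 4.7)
The plan is to combine the previous Lemma~\ref{lemme:defaut_negatif_ameliore} with a direct computation in a minimal presentation of $M$. First I dispense with the trivial case $M=0$. Assuming $M\neq 0$, the hypothesis $\mgo_AM=(x_{\delta+1},\dots,x_n)M$ is condition~(1) of Lemma~\ref{lemme:defaut_negatif_ameliore}, hence by that lemma we have $t_A(M)\geq \delta$; combined with the assumption $t_A(M)\leq \delta$ we obtain equality, so the lemma tells us that the morphism $(\ov{D_1},\dots,\ov{D_\delta}) : \kappa(A)^{t}\to \kappa(A)^{r\delta}$ is an isomorphism, where $A^{t}\xrightarrow{D}A^{r}\xrightarrow{\pi} M\to 0$ is a minimal presentation of $M$ and $D=\sum_{i=1}^n x_iD_i$.

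Next I verify the $M$-independence of $(x_{\delta+1},\dots,x_n)$ directly. Suppose $\sum_{i=\delta+1}^n x_im_i=0$ in $M$, and lift each $m_i$ to an element $f_i\in A^{r}$ via $\pi$. Then $\sum_{i=\delta+1}^n x_if_i\in \Ker\pi = D(A^{t})$, so there exists $\lambda\in A^{t}$ with
\[
\sum_{i=\delta+1}^n x_if_i \;=\; D\lambda \;=\; \sum_{i=1}^n x_i D_i\lambda.
\]
Rearranging this produces a relation in the free module $A^{r}$ of the form $\sum_{i=1}^n x_ig_i=0$, with $g_i = -D_i\lambda$ for $i\leq \delta$ and $g_i = f_i-D_i\lambda$ for $i>\delta$.

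Since $(x_1,\dots,x_n)$ is a minimal system of generators of $\mgo_A$, it is $A$-independent, and a fortiori $A^{r}$-independent (the module $A^{r}$ being free). Hence each $g_i$ belongs to $\mgo_A A^{r}$; for $i\leq \delta$ this means $\ov{D_i}\,\ov{\lambda}=0$ in $\kappa(A)^{r}$, while for $i>\delta$ it reads $f_i\equiv D_i\lambda\pmod{\mgo_AA^{r}}$. The injectivity of $(\ov{D_1},\dots,\ov{D_\delta})$ established in the first paragraph forces $\ov{\lambda}=0$, i.e.\ $\lambda\in \mgo_AA^{t}$. Therefore $D_i\lambda\in \mgo_AA^{r}$ for every $i$, and in particular $f_i\in \mgo_AA^{r}$ for $i>\delta$. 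Applying $\pi$ gives $m_i\in \mgo_AM=(x_{\delta+1},\dots,x_n)M$, proving the $M$-independence.

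The only subtle point is the bookkeeping in the rearranged relation $\sum x_ig_i=0$: one must be careful to exploit both halves of the conclusion that each $g_i$ lies in $\mgo_AA^{r}$, the first half feeding the isomorphism of the first paragraph to kill $\ov{\lambda}$, the second half then yielding the desired conclusion $f_i\in \mgo_AA^{r}$. Once this is done, the remainder of the argument is essentially formal and relies only on Lemma~\ref{lemme:defaut_negatif_ameliore} and the $A$-independence of a minimal system of generators of $\mgo_A$.
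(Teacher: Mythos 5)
Your proof is correct and follows essentially the same route as the paper: lift the relation $\sum_{i>\delta}x_im_i=0$ to a relation $\sum_{i=1}^n x_ig_i=0$ in $A^r$ through the minimal presentation, invoke the $A^r$-independence of the minimal generating sequence $(x_1,\dots,x_n)$ of $\mgo_A$ to get $g_i\in\mgo_A A^r$, and then use the isomorphism $(\ov{D_1},\dots,\ov{D_\delta})$ supplied by Lemma~\ref{lemme:defaut_negatif_ameliore} to force $\lambda\in\mgo_A A^t$ and hence $f_i\in\mgo_A A^r$. You spell out slightly more explicitly the chain $t_A(M)\ge\delta$ from the Lemma combined with $t_A(M)\le\delta$ to get equality, and the passage from $A$-independence to $A^r$-independence, but the argument is the same.
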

\begin{proof}
We keep the notations of~\ref{lemme:defaut_negatif_ameliore} for the minimal presentation of $M$. Let $\sum_{i=\delta+1}^nx_im_i=0$ be a relation in $M$. Let $f_i\in A^r$ be 
such that $\pi(f_i)=m_i$. Then we have $\sum x_if_i\in \Ker \pi$, hence 
there exists $\lambda \in A^t$ such that
\[
 \sum_{i=\delta+1}^nx_if_i=D\lambda=\sum_{i=1}^n x_iD_i\lambda\, .
\]
By independence of $x_1,\dots, x_n$ on $A^r$, the elements $D_i\lambda$, for $i\leq \delta$, and 
$f_i-D_i\lambda$ for $i\geq \delta+1$, are in $\mgo_A.A^r$. Since 
by~\ref{lemme:defaut_negatif_ameliore} $(\ov{D_1},\dots, \ov{D_{\delta}})$ is 
an isomorphism, 
this implies that $\lambda\in \mgo_A.A^t$, and in turn $f_i\in \mgo_A.A^r$ for 
$i\geq \delta+1$. Then 
$m_i\in \mgo_AM=(x_{\delta+1}, \dots, x_n)M$ which proves that $(x_{\delta+1}, 
\dots, x_n)$ is $M$-independent.
\end{proof}

In the following Theorem, note that if we assume that $B/\mgo_AB$ is a complete intersection, then by~\cite[5.4]{Brochard_Mezard_ConjDeSmit} we have~$\mu_B(\mgo_AB)\leq \edim(B)$, hence any finite $B$-module~$M$ such that $t_A(M)\leq \edim(A)-\edim(B)$ satisfies the assumptions.

\begin{thm}
 \label{thm_VP}
Let $\varphi : A\to B$ be a finite local morphism of Noetherian local rings and~$M$ a finite $B$-module. Assume that there exists an integer $\delta\geq 0$ such that $t_A(M)\leq \delta\leq \edim(A)-\edim(B)$ and 
$\mgo_AM=(x_{\delta+1}, \dots, x_n)M$ for some minimal system of generators $(x_1, \dots, x_n)$ of $\mgo_A$. Then: 
\begin{enumerate}
 \item $M$ is free over $B$.
\item If $M\neq 0$, then 
$B/\mgo_A B$ is a complete intersection of dimension 0 and 
we have the equalities:
$$t_A(M)= \delta= \edim(A)-\edim(B)=t_A(B)$$
\details{
On a aussi :
\begin{align*}
& \edim(B)=\mu_B(\mgo_AB) \\
& t_{A}(M)=t_A(B)=t_{\ov{A}}(\ov{B})=\delta
\end{align*}
where $\ov{A}=A/\mgo_A^2$ and $\ov{B}=B/\mgo_A^2B$.
}
\end{enumerate}
\end{thm}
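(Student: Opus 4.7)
The plan is to extract from the hypothesis an $M$-independent ideal $I\subseteq \mgo_B$ of length $\edim(B)$, identify $I$ with $\mgo_A B$ via the freeness of $M/\mgo_A M$ over $B/\mgo_A B$ that this forces, and finally upgrade to $B$-freeness of $M$.

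Assume $M\neq 0$ (the statement is trivial otherwise). By Proposition~\ref{prop:critere_suite_Mindep_pour_modules_eq}, the sequence $(x_{\delta+1},\dots,x_n)$ is $M$-independent in~$A$, and by Proposition~\ref{prop:permanence_indep}~(1) its image in $B$ is still $M$-independent; let $I\subseteq \mgo_B$ be the ideal of $B$ it generates. Since $M\neq 0$, Nakayama's Lemma (cf.\ Section~\ref{section:indep_seq}) forces this sequence to be a minimal system of generators of $I$, so $\mu_B(I)=n-\delta\leq \edim(B)$. Combined with $\delta\leq \edim(A)-\edim(B)=n-\edim(B)$, this yields the equalities $\delta=\edim(A)-\edim(B)$ and $\mu_B(I)=\edim(B)$. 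Theorem~\ref{thm:freeness_criterion_from_Mindep} then tells us that $B/I$ is a complete intersection of dimension zero and that $M/IM$ is free over $B/I$.

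Next I identify $I$ with $\mgo_A B$: the hypothesis $\mgo_A M=(x_{\delta+1},\dots,x_n)M$ reads $\mgo_A M=IM$, so the nonzero free $B/I$-module $M/IM=M/\mgo_A M$ is in fact a module over $B/\mgo_A B$. Hence the ideal $\mgo_A B/I$ of $B/I$ annihilates a faithful $B/I$-module and must vanish. Therefore $I=\mgo_A B$, and $B/\mgo_A B=B/I$ is a complete intersection of dimension zero.

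To conclude that $M$ is free over $B$, the plan is to promote the $M$-independence of $I$ to strong $M$-independence---equivalently, $R_{I^j}(M)\subseteq IM$ for every $j\geq 1$---and then invoke Proposition~\ref{prop:indep_implies_freeness}~(2). This promotion is the main obstacle. I would prove it by induction on $j$, exploiting the complete intersection structure of $B/I$ (which pins down $I^j/I^{j+1}$ as a $B/I$-module) together with the Koszul liftings supplied by Corollaries~\ref{cor:pseudo-homotopies} and~\ref{cor:homotopies}, applied at each power $I^j$ in the spirit of Theorem~\ref{thm:liaison_suites_Mindep}. The numerical conclusions in~(2) then fall out easily: $t_A(M)=\delta$ is already forced by Lemma~\ref{lemme:defaut_negatif_ameliore} (the hypotheses give $\delta\leq t_A(M)\leq \delta$), and once $M\cong B^s$ is established with $s=\mu_B(M)$, the isomorphisms $\Tor_i^A(M,k)\cong \Tor_i^A(B,k)^s$ yield $t_A(B)=t_A(M)=\delta$.
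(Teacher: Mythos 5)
Your preliminary reductions are correct, and the identification $I=\mgo_A B$ via the faithfulness of the nonzero free $B/I$-module $M/IM$ is a clean observation (arguably tidier than the paper's route, which instead combines a cited bound $\mu_B(\mgo_A B)\leq\edim(B)$ with an application of Theorem~\ref{thm:liaison_suites_Mindep} to the two ideals $(x_{\delta+1},\dots,x_n)B$ and $\mgo_A B$). A small slip: you write $\mu_B(I)=n-\delta\leq\edim(B)$ where the relevant inequality is $n-\delta\geq\edim(B)$, which is what feeds into Theorem~\ref{thm:freeness_criterion_from_Mindep}~(1); your subsequent conclusions are nonetheless the right ones.

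The genuine gap is the passage from $M$-independence of $I=\mgo_A B$ to \emph{strong} $M$-independence, which you announce as ``the main obstacle'' and then only sketch (``I would prove it by induction on $j$\dots''). This is precisely where all the difficulty lies, and the sketch as written does not close it. The hypotheses you invoke in the sketch---$M$-independence of $I$ plus $B/I$ being a zero-dimensional complete intersection with $\mu_B(I)=\edim(B)$---are \emph{not} sufficient to force strong $M$-independence: the paper itself records the counterexample $I=(x^3)$ in $B=k[x]/(x^8)$, which is $B$-independent with $B/I$ a complete intersection, yet not strongly $B$-independent. What would have to save your argument is the extra numerical equality $t_A(M)=\delta$, but your sketch never engages with it. In contrast, this equality is the engine of the paper's proof: taking a minimal $A$-presentation $\pi:A^r\to B^\ell$ and the induced presentation of $M$ with kernel $K'\supseteq K:=\Ker\pi$, the paper uses $t_A(M)=\delta$ to pin down the size of a minimal generating set of $K'$, constructs an explicit square matrix $P$ with $D=D'P$ relating the two presentation matrices, and shows $\bar P$ is invertible---hence $K=K'$ and $B^\ell\to M$ is an isomorphism. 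Nothing in your Koszul-lifting sketch performs this comparison of kernels, and I see no reason it would succeed by induction on powers of $I$ alone.
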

\begin{proof}
We may assume that $M\neq 0$. 
By~\ref{prop:critere_suite_Mindep_pour_modules_eq} the sequence $(x_{\delta+1}, 
\dots, x_n)$ is $M$-independent. But it has length $n-\delta$ and by assumption 
$\edim(B)\leq n-\delta$. Then by~\ref{thm:freeness_criterion_from_Mindep} we 
already know that $B/\mgo_A B$ is a complete intersection of dimension 0, that 
$\edim(B)=\edim(A)-\delta$ and that $M/\mgo_AM$ is free over 
$B/\mgo_AB$. By~\cite[5.4]{Brochard_Mezard_ConjDeSmit} we have 
$\mu_B(\mgo_AB)\leq \edim(B)$. By~\ref{thm:liaison_suites_Mindep} applied to the ideals $(x_{\delta+1},\dots, x_n)B$ and~$\mgo_AB$ of $B$, we get $\mu_B(\mgo_AB)=n-\delta$. By~\ref{cor:defaut_negatif}, 
$t_A(M)=\delta$ and $t_A(B)\geq \delta$. 

 Let $m_1, \dots, m_{\ell}$ be elements of $M$ the images of which form a 
$\kappa(B)$-basis of $M/\mgo_BM$. Let $p : B^{\ell}\to M$ be the corresponding 
surjective morphism. Since $M_0=M/\mgo_AM$ is free over 
$B_0=B/\mgo_A B$, the morphism $p$ induces an isomorphism $\ov{p} : B_0^{\ell} 
\to M_0$. In particular, $\Ker p \subset \mgo_AB^{\ell}$. Let us also choose a 
presentation of $B^{\ell}$ as an $A$-module, say $\pi : A^r \to B^{\ell}$, with 
$r$ minimal, i.e. $\pi$ 
induces an isomorphism $\ov{\pi} : \kappa(A)^r \to B_0^{\ell}$. In particular 
$p\circ \pi$ induces an isomorphism $\kappa(A)^r \to M_0$, hence $r$ is also 
the minimal number of generators for $M$ as an $A$-module. Let $K=\Ker \pi$ and 
$K'=\Ker(p\circ \pi)$. We have $K\subset K'\subset \mgo_A A^r$.
Since $\mu_B(\mgo_AB)=n-\delta$ there exists a minimal system of generators 
$(x_1,\dots, x_n)$ of $\mgo_A$ such that $x_{\delta+1}, 
\dots, x_n$ generate $\mgo_AB$. 

For each $i\leq \delta$, we choose a relation
\[
 \varphi(x_i)=\sum_{j=\delta+1}^nb_{ij}\varphi(x_j)
\]
in $B$. Let $(e_k)_{k=1\dots r}$ denote the canonical basis of $A^r$ and let us 
choose a lift $\beta_{ij}^k$ of $b_{ij}\pi(e_k)$ in $A^r$. Now for $1\leq i\leq 
\delta$ and $1\leq k\leq r$ let
\[
 g_{ik}:=x_ie_k-\sum_{j=\delta+1}^nx_j\beta_{ij}^k \in A^r.
\]
We have $\pi(g_{ik})=0$ hence $g_{ik}\in \Ker \pi=K$. Let $(f_{ik}), 
i\in\{1\dots\delta\}, k\in\{1\dots, r\}$ denote the canonical basis of 
$A^{\delta r}$ and let $D : A^{\delta r} \to A^r$ defined by $D(f_{ik})=g_{ik}$. 
We can write $D=\sum x_jD_j$ with $D_j$ defined by $D_j(f_{ik})=e_k$ if 
$i=j\leq \delta$, $D_j(f_{ik})=0$ if $j\leq \delta$ and $j\neq i$, and 
$D_j(f_{ik})=-\beta_{ij}^k$ if $j\geq \delta+1$.

On the other hand, we have $t_A(M)=\delta$. Let $D' : A^{\delta r} \to 
A^r$ be a minimal morphism with image $K'$. There exist a decomposition $D' 
=\sum x_iD'_i$. Since $K\subset K'$, there exists a square matrix $P\in 
M_{\delta r}(A)$ such 
that $D=D'P$. Then $\sum x_iD_i =\sum x_iD'_iP$ and it follows 
that $D_i=D'_iP$ modulo $\mgo_A$ for any $1\leq i\leq n$. Then 
$\ov{D_i}=\ov{D'_iP}$ for all $i$ and in particular we have the equality of 
square matrices:
\[
\begin{pmatrix}
 \ov{D_1} \\
\vdots \\
\ov{D_{\delta}}
\end{pmatrix} =
\begin{pmatrix}
 \ov{D'_1}\\
\vdots\\
\ov{D'_{\delta}}
\end{pmatrix}
P.
\]
But the left-hand side is the identity, hence $\ov{P}$ is invertible. Then 
$\det(\ov{P})\neq 0$ in $\kappa(A)$, so that $\det(P)\in A^\times$ and $P$ is 
invertible. This proves that $K=K'$. Hence $B^{\ell} \to M$ is an 
isomorphism and $M$ is free over $B$. Lastly, since $M$ is $B$-free and nonzero, we have $t_A(M)=t_A(B)$.
\end{proof}

\begin{remarque}\rm
 If $\mgo_A^2=0$ the inclusion $K\subset K'$ is an inclusion of 
$\kappa(A)$-vector spaces with $\dim K \geq d$ and $\dim K' \leq d$ so that the 
equality $K=K'$ is obvious and the proof is substantially simplified. Actually 
with the arguments in the second part of the above proof we get the following. (There is also a variant where the assumption $\mgo_A^2=0$ is replaced with the assumption that, with the notations of the proof of~\ref{thm_VP}, $\cap_{i=1}^\delta \Ker \ov{D_i}=0$. Indeed, this assumption also implies that $P$ is invertible and the proof flaws as in~\ref{thm_VP}.)
\end{remarque}

\begin{prop}
 \label{prop:free_on_specialfiber_implies_free}
Let $A\to B$ be a finite local morphism of Noetherian local rings with $\mgo_A^2=0$ and let $M$ be a finite $B$-module. Assume that
\[
 t_A(M)\leq t_A(B)
\]
and that $M/\mgo_AM$ is free over 
$B/\mgo_A B$. Then $M$ is free over $B$.
\end{prop}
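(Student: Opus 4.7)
The plan is to follow the strategy of the second half of the proof of Theorem~\ref{thm_VP}, which, as already noted in the preceding remark, becomes substantially simpler under the hypothesis $\mgo_A^2=0$. First I would set $\ell=\mu_B(M)$ and choose elements $m_1,\dots, m_\ell\in M$ whose images form a $\kappa(B)$-basis of $M/\mgo_BM$; the corresponding map $p:B^\ell\to M$ is surjective by Nakayama. Since $M/\mgo_AM$ is $B/\mgo_AB$-free by assumption, the reduction $\bar p:(B/\mgo_AB)^\ell\to M/\mgo_AM$ is an isomorphism, so $\Ker p\subset \mgo_AB^\ell$. Next I would pick a minimal $A$-presentation $\pi:A^r\to B^\ell$; since $\bar p$ is an isomorphism modulo $\mgo_A$, the composition $p\circ\pi:A^r\to M$ is still a minimal $A$-presentation, so $r=\mu_A(M)=\mu_A(B^\ell)$.

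Setting $K=\Ker\pi$ and $K'=\Ker(p\circ\pi)$, one has $K\subset K'\subset \mgo_AA^r$, and the crucial simplification is that $\mgo_A^2=0$ forces $\mgo_AK=\mgo_AK'=0$, so that $K$ and $K'$ are $\kappa(A)$-vector spaces with $\mu_A(\cdot)=\dim_{\kappa(A)}(\cdot)$. Using that a minimal $A$-presentation of $B^\ell$ is obtained from $\ell$ copies of a minimal $A$-presentation of $B$, one gets $t_A(B^\ell)=t_A(B)$, and Proposition~\ref{prop:calcul_defaut} then gives $\dim_{\kappa(A)}K=r\cdot t_A(B)$ and $\dim_{\kappa(A)}K'=r\cdot t_A(M)$. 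The hypothesis $t_A(M)\leq t_A(B)$ thus yields $\dim_{\kappa(A)}K'\leq \dim_{\kappa(A)}K$, and combined with the inclusion $K\subset K'$ this forces the equality $K=K'$.

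To finish, I would observe that $p$ is then injective: any $b\in\Ker p$ lifts through the surjection $\pi$ to some $x\in A^r$, and $p(\pi(x))=0$ means $x\in K'=K=\Ker\pi$, whence $b=\pi(x)=0$. Hence $p:B^\ell\to M$ is an isomorphism and $M$ is $B$-free. I do not expect any serious obstacle here: the whole argument is an instance of the mechanism explained in the preceding remark, the role of the hypothesis $\mgo_A^2=0$ being precisely to collapse the $A$-module inclusion $K\subset K'$, which was the technical heart of the proof of~\ref{thm_VP}, into a trivial inequality of $\kappa(A)$-vector space dimensions.
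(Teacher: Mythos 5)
Your proof is correct and follows essentially the same route as the paper: it is the specialization of the second half of the proof of Theorem~\ref{thm_VP} that the remark preceding this proposition sketches, with your explicit computation $\dim_{\kappa(A)}K = r\cdot t_A(B^\ell) = r\cdot t_A(B)$ and $\dim_{\kappa(A)}K' = r\cdot t_A(M)$ via~\ref{prop:calcul_defaut} supplying the dimension count that forces $K=K'$.
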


\begin{exemple}\rm
 \label{ex:free_sur_spe_nimplique_pas_free}
 This does not hold without the assumption that $\mgo_A^2=0$. For example let $A$ be the quotient of $k[x,y,z]$ by $(x,y,z)^2+(y,z)^2+(xz)$ and let $B$ be the quotient of $k[u,v]$ by $(u,v)^6+(v^2,uv)^2$. Let $A\to B$ be the morphism that maps $x, y, z$ to $u^2, uv, v^2$ respectively and let $M=B/(uv^2)$. Then $t_A(M)=t_A(B)=1$ and $M/\mgo_AM$ is free over $B/\mgo_AB$ but $M$ is not free over $B$.  
\end{exemple}

\bibliographystyle{plain}
\addcontentsline{toc}{section}{References}
\bibliography{../../../texmf/tex/latex/monlatex/mabiblio}

\end{document}